\theoremstyle{plain}
\newtheorem{theorem}{Theorem}
\newtheorem{proposition}[theorem]{Proposition}
\newtheorem{lemma}[theorem]{Lemma}
\theoremstyle{definition}
\newtheorem{definition}[theorem]{Definition}
\def\ho{\H o}
\def\ra{\rightarrow}
\def\ba{\begin{array}}
\def\ea{\end{array}}
\def\bi{\begin{itemize}}
\def\ei{\end{itemize}}
\def\mR{\mathbb{R}}
\def\mE{\mathbb{E}}
\def\m1{1}
\def\cZ{\mathcal{Z}}
\def\cN{\mathcal{N}}
\def\cL{\mathcal{L}}
\def\eps{\varepsilon}
\let\oldhat\hat
\renewcommand{\hat}[1]{\oldhat{\mathbf{#1}}}
\begin{document}
\title{Mixing time of an unaligned Gibbs sampler on the square}
\author{Bal\'azs Gerencs\'er\thanks{B. Gerencs\'er is with the Alfr\'ed R\'enyi Institute
    of Mathematics, Hungarian Academy of Sciences and
the ELTE E\"otv\"os Lor\'and University, Department of Probability Theory and Statistics,
  {\tt\small gerencser.balazs@renyi.mta.hu}. His work is supported by NKFIH (National Research, Development and Innovation Office) grant PD 121107.}%
}
\date{\today}

\maketitle

\begin{abstract}
  The paper concerns a particular example of the Gibbs sampler
  and its mixing efficiency. Coordinates of a point are rerandomized
  in the unit square $[0,1]^2$ to approach a stationary
  distribution with density proportional to $\exp(-A^2(u-v)^2)$ for $(u,v)\in [0,1]^2$ with some
  large parameter $A$.

  Diaconis conjectured the mixing time of this process to be $O(A^2)$ which
  we confirm in this paper. This improves on the
  currently known $O(\exp(A^2))$ estimate.
\end{abstract}

%MSC
%60J10 (Primary)
%37A25
%60J20

%Keywords
%Gibbs sampler
%Markov chain
%mixing time

\section{Introduction}

A standard use of Markov chains is to sample from a probability distribution
that would be otherwise hard to access.
This can happen when the
distribution is supported on a set implicitly defined by some constraints, e.g., a
convex body in a high dimensional space
\cite{kannan:convexvolume1997}, \cite{lovasz:convexvolume2006}, proper
colorings of a graph \cite{dyer:randomcolorrandomgraph2006},
\cite{mossel:gibbs_erdosrenyi2010}, etc.
Several frameworks have been
designed to achieve this goal including the Metropolis algorithm and
the Gibbs sampler and their variants.
There is a vast range of
applications and studies, we refer the reader to \cite{diaconis:metropolis1998}, \cite{diaconis:montecarlo_broadsurvey2009}
for orientation.

A central and recurring question is the efficiency of these algorithms
in the different settings. We highlight two phenomena that can
decrease the performance of such algorithms. First, the incremental
change the Markov chain allows is usually quite rigid and given by
the structure of the state space. However, the desired stationary
distribution does not need to be aligned with the directions where the
Markov chain mixes fast. Second, some boundary effects might occur if
the Markov chain can get trapped in some remote part of the state space.

In this paper we analyze an example of the Gibbs sampling procedure
proposed by Diaconis which is surprisingly simple considering it
captures both of the two phenomena above. We call the \emph{coordinate
  Gibbs sampler for the diagonal distribution} the following process. Fix a
large positive
constant $A$ and on $[0,1]^2$ define the distribution $\pi$ with
density proportional to $\exp(-A^2(u-v)^2)$ for $(u,v)\in [0,1]^2$. At each step randomly
choose coordinate $u$ or $v$ and rerandomize it according to the
conditional distribution of $\pi$.
Notice that the distribution of this Markov chain is mostly concentrated near
the diagonal of the unit square, while only horizontal and vertical
transitions are allowed.
Furthermore, near $(0,0)$ and $(1,1)$ we
see that both the high density of $\pi$ and also the boundaries of the
square hinder the movement of the chain.

The efficiency of the algorithm is quantified by the mixing
time of the Markov chain. For any Markov chain
$X(0),X(1),\ldots$ on some state space $\Omega$ (which is $[0,1]^2$ in
our case) let $\cL(X(t))$ denote the distribution of the
state at time $t$ and $\eta$ be the stationary distribution assuming
it is unique (denoted
by $\pi$ for our case). Using
the total variation distance between
measures, $\|\rho-\sigma\|_{\rm TV} := \sup_{S\subseteq \Omega}
|\rho(S)-\sigma(S)|$ we define the mixing time as
$$t_{\rm mix}(X,\eps):=\sup_{X(0)\in \Omega}\min\left\{t:\|\cL(X(t))-\eta\|_{{\rm
      TV}} \le \eps\right\}.$$
Diaconis conjectured that the
mixing time of the example proposed is $O(A^2)$, the goal of this
paper is to confirm this bound.

\begin{theorem}
  \label{thm:Xmix}
  Let $X(t)$ follow the coordinate Gibbs sampler for the diagonal
  distribution. For any $0< \alpha <1$ there exists $\beta > 0$ such that for large
  enough $A$,
  $$t_{\rm mix}(X, \alpha) \le \beta A^2.$$
\end{theorem}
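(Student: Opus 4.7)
The plan is to reduce the two-dimensional chain to an approximate one-dimensional random walk along the diagonal and to apply a coupling argument for such walks.

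The first observation is that the conditional density of $v$ given $u$ is proportional to $\exp(-A^2(u-v)^2)$ on $[0,1]$: a Gaussian centered at $u$ with standard deviation $\Theta(1/A)$, truncated to the unit interval. A single Gibbs step therefore places the state within transverse distance $O(1/A)$ of the diagonal $\{u=v\}$ with very high probability, regardless of the starting point. This motivates the change of variables $s = (u+v)/2$, $d = (u-v)/2$, with typical behaviour taking place in the region $|d| \le C/A$, parametrized by $s \in [0,1]$.

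Second, I would describe the single-step dynamics in the $(s,d)$ coordinates. When $v$ is updated, writing $v' = u + G$ with $G$ the (truncated) Gaussian noise, one computes $s' = s + d + G/2$ and $d' = -G/2$. Ignoring the truncation, the transverse coordinate equilibrates in a single step, while the longitudinal coordinate $s$ performs a walk on $[0,1]$ with mean step $d$ and standard deviation $\Theta(1/A)$. Since $d$ itself is of order $1/A$ with mean zero in stationarity, the effective $s$-dynamics is a mean-zero random walk with step size $\Theta(1/A)$. A reflecting one-dimensional random walk on $[0,1]$ with step size $\eps$ mixes in $\Theta(\eps^{-2})$ steps, which gives the target $O(A^2)$.

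Third, I would implement a coupling of two copies of the chain that realises this picture. At each step both chains update the same coordinate, and the two truncated-Gaussian updates are coupled maximally or via a reflection coupling, so that the one-dimensional difference $s_1-s_2$ performs a contracting random walk. Standard bounds on coupling times of one-dimensional walks on a segment then yield coalescence of the $s$-coordinates (and automatically of the transverse coordinates, which equilibrate in one step under the same noise) in $O(A^2)$ steps with positive probability, and the total variation mixing time bound follows.

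The main obstacle is the behaviour near the corners $(0,0)$ and $(1,1)$. There the truncation of the conditional Gaussian to $[0,1]$ is significant: the step distribution is no longer approximately symmetric, a drift toward the interior of the square appears, and the clean random-walk picture breaks down. I would control the dynamics in corner regions of scale $1/A$ separately, showing that the corner-induced drift only accelerates departure from the corner rather than trapping the chain, and that the total time spent near the corners up to coalescence is $O(A^2)$. Extending the coupling through these regions, either by a dedicated hitting-time argument or by careful case analysis adapted to the truncated-Gaussian step, is the technically delicate part of the proof.
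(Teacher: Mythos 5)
Your high-level picture is the right one and matches the paper's: after one Gibbs step the state sits within $O(1/A)$ of the diagonal, the longitudinal coordinate performs a mean-zero walk with step size $\Theta(1/A)$, and this suggests $\Theta(A^2)$ mixing. The paper reaches the one-dimensional picture differently (by diagonal-flipping the alternating chain so that $Y_u(t)$ is a scalar Markov chain, rather than by the change of variables $s=(u+v)/2$, $d=(u-v)/2$), and proves mixing from the middle by a direct total-variation comparison of $\mathcal{L}(Y_u(t))$ with a free Gaussian random walk $W$ (decoupled only when $W$ exits $[0,1]$, controlled via the Erd\H{o}s--Kac maximum estimate) rather than by a coalescence coupling of two copies. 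These are genuinely different routes to the same intermediate conclusion and both could be made to work.

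The gap in your proposal is precisely the step you flag as ``technically delicate'' and then leave unresolved: the corner regions. You assert that the corner-induced drift ``only accelerates departure from the corner rather than trapping the chain,'' but give no mechanism. This is the crux of the whole problem, because near $(0,0)$ both the high density of $\pi$ and the boundary hinder the chain, the truncated-Gaussian step is far from symmetric, and it is not at all obvious that it dominates a reflecting walk's step. The paper's escape-from-the-corner argument rests on a concrete stochastic domination (Lemma~\ref{lm:comp_mon_coupling}): for $v\ge\bar v\ge 0$ and $u\ge 0$, $\phi_{\bar v}([0,u])\ge\sigma_v([0,u])$, i.e.\ the one-sided-truncated Gaussian step of the Gibbs chain stochastically dominates the folded-Gaussian step of a reflecting random walk started lower, proved by a delicate rearrangement of Gaussian integrals. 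That is what yields an $O(A^2)$ hitting time of the middle. Without this lemma (or a substitute of comparable strength), your sketch does not close. A further difficulty for your coalescence strategy is that a reflection coupling of the two $s$-coordinates is not a standard contracting random walk when one copy is near a corner and the other is not, since the two step distributions are then truncated differently and are not shifts of each other, so shared or reflected noise will not align them in the straightforward way.
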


Up until now only $O(\exp(A^2))$ was
known which easily follows from a minorization condition of the
transition kernel.

Observe that the diagonal nature of the distribution plays an
important role in the mixing behavior, making the distribution and the randomization steps
unaligned. If we took the distribution with density
proportional to $\exp(A^2(u - 1/2)^2)$ for $(u,v)\in [0,1]^2$, then the
mixing time would decrease to be $O(1)$. Indeed, this is a product
distribution, product of one for $u$ and one (uniform) for $v$, consequently after
a rerandomization is performed along both coordinates, the
distribution of the process will exactly match the prescribed one. This will happen
with probability arbitrarily close to 1 within a corresponding finite
number of steps, not depending on the value of $A$.

The rest of the paper is organized as follows.
In Section \ref{sec:preliminaries} a formal definition of the
process of interest is provided and further variants are introduced that help the analysis. Section
\ref{sec:dyn_Yu} provides the building blocks for the proof, to
understand the short-term behavior of the process based on the
initialization. Afterwards, the proof of Theorem \ref{thm:Xmix} is aggregated in
Section \ref{sec:mixproof}. Finally, a complementing lower bound
demonstrating that Theorem \ref{thm:Xmix} is essentially sharp is
given in Section \ref{sec:others} together with some numerical simulations.

\section{Preliminaries, alternative processes}
\label{sec:preliminaries}

We now formally define the
\emph{coordinate Gibbs sampler for the diagonal distribution}
which we denote by $X(t)$, then we introduce
variants that will be more convenient to handle.

Let $\varphi(x):=\exp(-A^2x^2)$ for some large $A>0$ and
let $\pi$ be the probability distribution on $[0,1]^2$ with density $\cZ^{-1}\varphi(u-v)$
at $(u,v)\in [0,1]^2$ (where $\cZ=\int_{[0,1]^2}\varphi(u-v)$).
We write $\pi(\cdot,v)$ for the conditional distribution of the $u$ coordinate
when $v$ is fixed (similarly for $\pi(u,\cdot)$). Denote by $\pi_u$
the projection of $\pi$, that is, the overall distribution of the $u$ coordinate.

When defining the coordinate Gibbs sampler for the diagonal distribution, we separate the decision of the direction of
randomization and the randomization itself.
For $t=1,2,\ldots$ let $r(t)$ be an i.i.d.\ sequence of variables of
characters $U,V$ taking both with probability $1/2$.
Given some initial point $X(0)\in [0,1]^2$ the random variable $X(t) =
(X_u(t),X_v(t))$ is
generated as a Markov chain from $X(t-1)$ by randomizing
along the axis given by $r(t)$. Formally,
$$
X(t) :=
\begin{cases}
  \big(u^+,X_v(t-1)\big), & \text{ if } r(t)=U, \text{where } u^+\sim \pi(\cdot,X_v(t-1)), \\
  \big(X_u(t-1), v^+\big), & \text{ if } r(t)=V, \text{where } v^+\sim \pi(X_u(t-1),\cdot),
\end{cases},
$$
where $u^+, v^+$ are conditionally independent of the past at all steps.

Note that when multiple $U$'s follow each other in the series $r(t)$
(similarly for $V$), the values $u^+$ are repeatedly overwritten and
forgotten, with no further mixing happening for the overall
distribution. Therefore we define an alternative process where this effect
does not occur, but rather the directions of randomization
are deterministic.

Let $X^*(0):=X(0)$, then the following process is generated:
\begin{alignat*}{3}
  X^*(2s) &:= \big(u^+,X^*_v(2s-1)\big), &\qquad \text{where } u^+ &\sim \pi(\cdot,X^*_v(2s-1)),\\
  X^*(2s+1) &:= \big(X^*_u(2s), v^+\big), & \text{where } v^+& \sim \pi(X^*_u(2s),\cdot).
\end{alignat*}

It would be convenient for the analysis if it wasn't necessary to
distinguish the steps based on the parity of the time index.
For that reason, consider the following modification. At every even
step take $X^*(2s)$ as before, at every odd step take $X^*(2s+1)$ flipped along the
diagonal of the square (exchange the two coordinates). Equivalently,
flip the process at every step while generating. As a result, the
randomization happens in the same direction at every step. 
Note that the target distribution $\pi$ is
symmetric along the diagonal therefore no adjustment is needed for the flipping.
Formally, the process described is the following:

Let $Y(0):=X(0)$, then the random variables $Y(t)$ are generated from
$Y(t-1)$ as follows
$$Y(t) := \big(u^+,Y_u(t-1)\big), \qquad \text{where } u^+ \sim \pi(\cdot,Y_u(t-1)).$$

Observe that the scalar process $Y_u(t)$ is a Markov chain by itself
simply because $Y(t)$ depends on $Y(t-1)$ only through $Y_u(t-1)$.

\section{Dynamics of $Y_u(t)$}
\label{sec:dyn_Yu}

In this section we prove two properties of the evolution of $Y_u(t)$,
which will be the key elements to compute the mixing time bounds. First, we show that the process cannot stay arbitrarily long
at the sides of the unit interval, in $[0,1/2-\delta)$ or
$(1/2+\delta,1]$, where some small enough parameter $\delta>0$ will be
chosen. Second, we prove that starting from a point in the
middle part $[1/2-\delta,1/2+\delta]$, the distribution of the
process quickly approaches the stationary distribution.

\subsection{Reaching the middle}
\label{sec:middle}

We work on the case when the $Y_u(0)$ is away from the
middle of $[0,1]$. 
We want to ensure that the process does not stay near the boundaries
for a long period. To quantify this, the time to reach
the middle is defined as follows:

\begin{definition}
  Let $\nu_m := \min \{s: Y_u(s) \in[1/2-\delta, 1/2+\delta]\}$.
\end{definition}

Without the loss of generality we may assume that $Y_u(0)$ is on the
left part of $[0,1]$, thanks to the symmetry of $\pi$
w.r.t.\ $(1/2, 1/2)$. Therefore we start from $Y_u(0)<1/2-\delta$.
For this period before reaching the middle we introduce a slightly simplified process $Y'$, where
both coordinates are allowed to take values in $[0,\infty)$ in
principle. This is not supposed to have a substantially different behavior, but will allow more convenient
analytic investigation as fewer boundaries are present.

For any $v\in\mR$ let $\sigma_v$ be the measure on $[0,\infty)$ with density proportional to
$\varphi(u-v)$ conditioned on $u\in [0,\infty)$. Let
$Y'(0):=X(0)$, then define the Markov chain $Y'(t)$ as follows:
$$Y'(t) := \big(u^+,Y'_u(t-1)\big), \qquad \text{where } u^+ \sim \sigma_{Y'_u(t-1)}.$$

We can generate $Y'(t)$ to be coupled to $Y(t)$ as long as possible. For a
fixed $v$, $\pi(u,v)$ is proportional to $\varphi(u-v)$ conditioned
on $u\in [0,1]$. Therefore, when we need to generate $u^+$ we draw a
random sample from $\sigma_{Y'_u(t-1)}$ and use it for both $Y(t)$ and
$Y'(t)$ if $u^+<1$. Otherwise, we use it for $Y'(t)$ but for $Y(t)$ we draw a new independent sample from
$\pi(\cdot,Y'_u(t-1))$. It is easy to verify this is overall a valid method for generating a
random variable of distribution $\pi(\cdot,Y'_u(t-1))$.

In the latter case, we also signal decoupling by setting a stopping time
$\nu_c^1=t$. We show this rarely happens, when governed by a variant
of $\nu_m$.
Let $\tilde{\nu}_m := \min \{s: Y_u(s) \ge 1/2-\delta\}$.

\begin{lemma}
  \label{lm:Yprime_coupling}
  For any $\alpha_1>0$ there is $\beta_1>0$ such that $P(\nu_c^1 < \min(\tilde{\nu}_m,
  \alpha_1 A^2)) = O(\exp(-\beta_1 A^2))$.
\end{lemma}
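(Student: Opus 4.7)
The plan is a straightforward one-step Gaussian tail estimate combined with a union bound over the $O(A^2)$ steps in the horizon. The intuition is that before $\tilde{\nu}_m$ the process $Y'_u$ sits in $[0, 1/2-\delta)$, so the distance to the right boundary $1$ is at least $1/2+\delta$; the probability that the fresh Gaussian-like draw $u^+\sim\sigma_{Y'_u(t-1)}$ exceeds $1$ (the only way $\nu_c^1=t$ can occur) is therefore of order $\exp(-A^2(1/2+\delta)^2)$, and this dominates any polynomial factor like $\alpha_1 A^2$.

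For the one-step bound, fix $t\ge 1$ and condition on the event $\{\nu_c^1\ge t\}\cap\{t\le\tilde{\nu}_m\}$, which is measurable with respect to $Y'(0),\ldots,Y'(t-1)$. On this event the coupling is still intact at time $t-1$, so $Y'_u(t-1)=Y_u(t-1)\in[0,1/2-\delta)$. Writing $v:=Y'_u(t-1)$, the decoupling probability at step $t$ is
\[
P(u^+\ge 1\mid v)=\frac{\int_1^\infty \varphi(u-v)\,du}{\int_0^\infty \varphi(u-v)\,du}.
\]
A change of variables $w=A(u-v)$ and the standard Gaussian tail bound $\int_x^\infty e^{-w^2}\,dw\le \frac{1}{2x}e^{-x^2}$ give $\int_1^\infty\varphi(u-v)\,du\le \frac{1}{2A^2(1-v)}\exp(-A^2(1-v)^2)$. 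Since $v\ge 0$, the denominator is at least $\int_0^\infty \varphi(u)\,du = \frac{\sqrt{\pi}}{2A}$. Combining and using $1-v>1/2+\delta$, the ratio is at most $\frac{C_\delta}{A}\exp\bigl(-A^2(1/2+\delta)^2\bigr)$ for a constant $C_\delta$ that depends only on $\delta$.

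To conclude, I apply a union bound over the at most $\lfloor \alpha_1 A^2\rfloor$ time steps before $\alpha_1 A^2$:
\[
P\bigl(\nu_c^1<\min(\tilde{\nu}_m,\alpha_1 A^2)\bigr) \;\le\; \alpha_1 A^2\cdot \frac{C_\delta}{A}\exp\bigl(-A^2(1/2+\delta)^2\bigr) \;=\; O\bigl(A\exp(-A^2(1/2+\delta)^2)\bigr),
\]
which is $O(\exp(-\beta_1 A^2))$ for any $\beta_1<(1/2+\delta)^2$. Note that $\beta_1$ can be chosen independently of $\alpha_1$, since the polynomial factor $\alpha_1 A^2/A$ is absorbed by shrinking the exponent slightly.

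No step is truly an obstacle here; the only care needed is to verify that the one-step decoupling estimate is \emph{uniform} in $v\in[0,1/2-\delta)$, which is exactly what the Gaussian tail computation delivers, and to make sure the denominator does not degenerate as $v\to 0$ (handled by the lower bound $\int_0^\infty\varphi(u)\,du\ge \sqrt{\pi}/(2A)$). The conditioning argument is clean because the relevant event is measurable with respect to the past, so the one-step bound can be applied pointwise before summing.
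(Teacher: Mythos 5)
Your proposal is correct and uses essentially the same argument as the paper: a one-step Gaussian tail estimate for the overflow probability $\sigma_v(\{u^+>1\})$ (exploiting $v\ge 0$ to control the conditioning, and $1-v>1/2+\delta$ to control the exponent), followed by a union bound over the at most $\alpha_1 A^2$ steps, with the polynomial prefactor absorbed into the exponent. The only cosmetic difference is that you bound the conditional probability directly via the ratio $\int_1^\infty\varphi/\int_0^\infty\varphi$, while the paper phrases the same step as ``conditional at most twice unconditional''; the resulting bound and $\beta_1<(1/2+\delta)^2$ are identical.
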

\begin{proof}
  We want to bound the probability of decoupling at every point in time.

  When $u^+$ is drawn, $Y'_u(t-1)<1/2-\delta$ is ensured as
  $\tilde{\nu}_m$ has not yet occurred. For any $v<1/2-\delta$ we have
  $$\sigma_v(\{u^+>1\})\le 2 P(u > 1, u\sim \cN(v,1/(2A^2)))\le
  2\frac{\exp(-A^2(1/2+\delta)^2)}{2\sqrt{\pi}A(1/2+\delta)}.$$
  Here we use that the conditional probability is at most twice the
  unconditional one (because of $v\ge 0$), use the monotonicity in $v$, then apply a standard tail
  probability estimate for the Gaussian distribution.

  These exceptional events may occur at most at $\alpha_1 A^2$ different
  times, therefore by using the union bound the overall probability is
  $O(\exp(-\beta_1 A^2))$ for any $\beta_1< (1/2+\delta)^2$.
\end{proof}

\begin{lemma}
  \label{lm:nu_m_variants}
  There exists $\beta_2>0$ constant such that $P(\nu_m \neq
  \tilde{\nu}_m) = O(\exp(-\beta_2 A^2)$.
\end{lemma}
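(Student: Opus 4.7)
The first observation is that $[1/2-\delta,1/2+\delta]\subset[1/2-\delta,1]$, so every time $s$ at which $Y_u(s)$ lies in the middle band also satisfies $Y_u(s)\ge 1/2-\delta$. Hence $\tilde{\nu}_m\le\nu_m$ always (including the case where both are infinite), and the two stopping times disagree precisely when $Y_u(\tilde{\nu}_m)>1/2+\delta$. The strategy is therefore to bound, in a single step, the probability of an overshoot across the entire middle band.

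Because we work under the standing assumption $Y_u(0)<1/2-\delta$, we have $\tilde{\nu}_m\ge 1$, and by minimality of $\tilde{\nu}_m$ the value $v:=Y_u(\tilde{\nu}_m-1)$ lies in $[0,1/2-\delta)$. The next state is generated as $u^+\sim\pi(\cdot,v)$, so by the strong Markov property
$$P(\nu_m\ne\tilde{\nu}_m)=E\!\left[\pi\!\left(\{u>1/2+\delta\},v\right)\mathbf{1}_{\tilde{\nu}_m<\infty}\right]\le\sup_{v\in[0,1/2-\delta)}\pi\!\left(\{u>1/2+\delta\},v\right).$$
This reduces the problem to a purely analytic estimate on a conditional Gaussian.

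The conditional probability $\pi(\{u>1/2+\delta\},v)$ is the ratio $\int_{1/2+\delta}^1\varphi(u-v)\,du\big/\int_0^1\varphi(u-v)\,du$. For $v<1/2-\delta$ the numerator is the tail of a centred Gaussian of variance $1/(2A^2)$ outside distance $>2\delta$ from its peak, bounded via the standard estimate by $O\!\left(e^{-4A^2\delta^2}/(A^2\delta)\right)$. The denominator contains the Gaussian peak $u=v\in[0,1/2-\delta)$ well inside $[0,1]$ for $A$ large, hence it is of order $\Omega(1/A)$. Dividing gives $O(e^{-4A^2\delta^2})$ uniformly in $v$, and the lemma follows for any $\beta_2<4\delta^2$.

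The main thing to verify is that the tail bound is genuinely uniform over $v\in[0,1/2-\delta)$; in particular, the denominator does not degenerate near $v=0$ because for $A$ large the half-width $1/A$ of the Gaussian peak is comfortably smaller than $1$. Other than this, no serious obstacle is expected — the proof reduces to a one-step Gaussian tail comparison and does not require any control on how long $\tilde{\nu}_m$ itself takes.
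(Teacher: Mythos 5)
Your reduction to a one-step overshoot estimate is the same idea the paper uses, and the final estimate $O(e^{-4A^2\delta^2})$ is correct; however, the ``strong Markov property'' step in your displayed equality is wrong. The time $\tilde\nu_m-1$ is \emph{not} a stopping time: determining $\{\tilde\nu_m=t\}$ requires observing $Y_u(t)$, so conditioning on $\tilde\nu_m$ already forces $Y_u(\tilde\nu_m)\in[1/2-\delta,1]$. Given $v:=Y_u(\tilde\nu_m-1)$ \emph{and} the event $\{\tilde\nu_m<\infty\}$, the law of $Y_u(\tilde\nu_m)$ is therefore $\pi(\cdot,v)$ restricted to $\{u\ge 1/2-\delta\}$, not $\pi(\cdot,v)$. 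Writing $p_v:=\pi(\{u>1/2+\delta\},v)$ and $q_v:=\pi(\{u\ge 1/2-\delta\},v)$, one has
$$P(\nu_m\neq\tilde\nu_m)=\sum_{t\ge1} E\bigl[p_{Y_u(t-1)}\mathbf{1}_{\tilde\nu_m\ge t}\bigr],\qquad E\bigl[p_{Y_u(\tilde\nu_m-1)}\mathbf{1}_{\tilde\nu_m<\infty}\bigr]=\sum_{t\ge1} E\bigl[p_{Y_u(t-1)}q_{Y_u(t-1)}\mathbf{1}_{\tilde\nu_m\ge t}\bigr],$$
and the two differ by the factor $q_{Y_u(t-1)}\le 1$; your expression systematically underestimates the left-hand side. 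The closing remark that ``the proof does not require any control on how long $\tilde\nu_m$ itself takes'' is precisely the symptom: some such accounting is unavoidable, and the clean way to do it is through $q_v$.

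The correct bound is
$$P(\nu_m\neq\tilde\nu_m)\;\le\;\sup_{v\in[0,1/2-\delta)}\frac{p_v}{q_v}=\sup_{v}\frac{\displaystyle\int_{1/2+\delta-v}^{1-v} e^{-A^2w^2}\,dw}{\displaystyle\int_{1/2-\delta-v}^{1-v} e^{-A^2w^2}\,dw},$$
and this ratio is still $\le e^{-4A^2\delta^2}$: for $w\ge 1/2-\delta-v>0$ one has $e^{-A^2w^2}\ge e^{4A^2\delta^2}e^{-A^2(w+2\delta)^2}$, and integrating this over the denominator's domain and shifting $w\mapsto w+2\delta$ dominates the numerator. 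So the lemma holds with $\beta_2=(2\delta)^2$, matching the paper, and the fix leaves your key analytic estimate intact. The paper's own proof is terse and just asserts the one-step Gaussian tail bound, but it effectively lands on the same quantity; it does not make the pitfall you fell into explicit, which is why the issue is easy to miss.
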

\begin{proof}
  By a similar argument as above this bad event $\{\nu_m \neq
  \tilde{\nu}_m\}$ happens when $Y_u(t-1)<1/2-\delta$ but
  $Y_u(t)>1/2+\delta$ when $\tilde{\nu}_m$ occurs, then a Gaussian tail probability estimate gives
  an upper bound of
  $$2\frac{\exp(-A^2(2\delta)^2)}{2\sqrt{\pi}A(2\delta)}.$$
  The lemma holds with $\beta_2=(2\delta)^2$.
\end{proof}

Handling $Y'(t)$ is still challenging due to the conditional
distributions included in the definition. Therefore we introduce the
following process that will be both convenient to handle and
to relate to $Y'(t)$.

Let $\tilde{Z}(t)$ be a random walk with i.i.d.\
$\cN(0,1/(2A^2))$ increments, starting from $\tilde{Z}(0) := X_u(0)$.\\
Let $Z(t) := |\tilde Z(t)|$.

Let us denote by $\phi$ the distribution of the centered Gaussian with
variance $1/(2A^2)$.
During the analysis of $Z(t)$ we will also need to use the distribution of
the absolute value of a Gaussian distribution with variance $1/(2A^2)$. We denote it by $\phi_x$ when the original one is
centered at $x$ and it is easy to verify that we can express it for any
$A\subset [0,\infty)$ by $\phi_x(A) = \phi(A-x)+\phi(-A-x)$.

\begin{proposition}
  \label{prp:couple_yprime_z}
  $Z(t)$ and $Y'_u(t)$ can be coupled such that $Z(t)\le Y'_u(t)$ for all $t\ge 0$.
\end{proposition}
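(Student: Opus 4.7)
The plan is to build the coupling inductively, using a monotone (stochastic-dominance) coupling of the one-step transition kernels at every step. First I observe that, although $\tilde Z$ can take negative values, the process $Z(t)=|\tilde Z(t)|$ is itself a Markov chain on $[0,\infty)$: since the Gaussian increment is symmetric, the conditional law of $|\tilde Z(t-1)+N_t|$ given $|\tilde Z(t-1)|=z$ does not depend on the sign of $\tilde Z(t-1)$, and its transition kernel is $z\mapsto |\cN(z,1/(2A^2))|$. In parallel, $Y'_u$ is Markov with transition kernel $y\mapsto \cN(y,1/(2A^2))\mid\{\cdot\ge 0\}$, and both processes start at $X_u(0)\ge 0$, so the base case $Z(0)=Y'_u(0)$ holds.

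The heart of the proof is a one-step stochastic comparison: for any $0\le z\le y$ and any $\tau>0$,
$$|\cN(z,\tau^2)|\;\preceq\; \cN(y,\tau^2)\mid\{\cdot\ge 0\},$$
where $\preceq$ denotes stochastic domination. Given this, whenever $Z(t-1)\le Y'_u(t-1)$, I apply the comparison with $z=Z(t-1)$, $y=Y'_u(t-1)$ and couple $(Z(t),Y'_u(t))$ via the monotone (inverse-CDF) coupling of the two kernels against a common uniform variable, yielding the correct marginals together with $Z(t)\le Y'_u(t)$ almost surely.

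To prove the one-step comparison, I would split it into
$$|\cN(z,\tau^2)|\;\preceq\;|\cN(y,\tau^2)|\;\preceq\;\cN(y,\tau^2)\mid\{\cdot\ge 0\}.$$
The first is monotonicity of the folded Gaussian in its location: writing the survival function at level $t\ge 0$ as $\Phi((z-t)/\tau)+\Phi(-(z+t)/\tau)$ (with $\Phi$ the standard normal CDF), its derivative in $z$ is non-negative because $|z-t|\le z+t$ whenever $z,t\ge 0$. The second piece, after setting $c=y/\tau$ and $d=t/\tau$, reduces to the scalar inequality
$$\Phi(-c-d)\,\Phi(c)\;\le\;\Phi(c-d)\,\Phi(-c),\qquad c,d\ge 0.$$

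The main obstacle is this last inequality, essentially a reflection-versus-conditioning comparison for Gaussians. I plan to deduce it from log-concavity of $\Phi$: set $h(x):=\log\Phi(x-d)-\log\Phi(x)$. Then $h'(x)=(\log\Phi)'(x-d)-(\log\Phi)'(x)\ge 0$ because $\log\Phi$ is concave, so $h$ is non-decreasing; evaluating at $x=c$ and $x=-c$ gives $h(c)\ge h(-c)$ for $c\ge 0$, which is precisely the required inequality. Once this piece is in hand, the rest of the argument is pure bookkeeping.
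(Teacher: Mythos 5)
Your proposal is correct, and the overall architecture matches the paper's: build a monotone (inverse-CDF) coupling step by step, reduce to a one-step stochastic comparison, and split that comparison into the two legs
$|\cN(z,\tau^2)|\preceq|\cN(y,\tau^2)|\preceq\cN(y,\tau^2)\mid\{\cdot\ge 0\}$,
proving the first leg by differentiating the folded-Gaussian CDF in its location. The only genuine divergence is in the second leg. After the same algebraic reduction to $\Phi(-c-d)\Phi(c)\le\Phi(c-d)\Phi(-c)$, you invoke log-concavity of $\Phi$ (equivalently, monotonicity of the Mills ratio) to show $x\mapsto\log\Phi(x-d)-\log\Phi(x)$ is non-decreasing. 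The paper instead cancels common mass between the reflected and the conditioned distributions and finishes with a direct pointwise comparison of a two-variable integrand on a domain where $x\le y$; this is longer but entirely self-contained. Your route is cleaner and more conceptual, at the cost of importing the standard (but not entirely trivial) fact that $\Phi$ is log-concave — if you want the argument to be self-contained you should justify $(\log\Phi)''\le 0$, e.g.\ via $\phi'(x)=-x\phi(x)$ and the Gaussian tail bound $\phi(x)\ge -x\,\Phi(x)$.
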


\begin{proof}
  At 0 we have $Z(0) = Y'_u(0)$. We construct the coupling iteratively,
  assuming $Z(t-1) \le Y'_u(t-1)$ we perform the next step of the
  coupling which will satisfy $Z(t) \le Y'_u(t)$.

  We will use the monotone coupling between the two. For two probability
  distributions $\rho,\rho'$ the monotone coupling is the one
  assigning $x$ to $x'$ when $\rho((-\infty,x])=\rho'((-\infty,x'])$.
  (We now skip currently irrelevant technical details about continuity,
  etc.). It is easy to verify that $x\le x'$ is
  maintained through this coupling exactly if $\rho((-\infty,y]) \ge
  \rho'((-\infty,y])$ for all $y$. In our case we will need the following: 
  \begin{lemma}
    \label{lm:comp_mon_coupling}
    For any $v\ge \bar{v}\ge 0$ and $u\ge 0$:
    $$\phi_{\bar{v}}([0,u])\ge \sigma_v([0,u]).$$
  \end{lemma}
  Here $\bar{v}$ corresponds to $Z(t-1)$ and $v$ to $Y'_u(t-1)$ and we
  compare the distributions for step $t$.
    
  \begin{proof}
    We are going to prove the following two inequalities:
    $$\phi_{\bar{v}}([0,u])\ge \phi_v([0,u]), \qquad \phi_v([0,u])\ge \sigma_v([0,u]).$$

    For the first of the two we compute $\partial_v \phi_v([0,u])$:
    $$\partial_v \phi_v([0,u]) = \partial_v\left(\phi([-v-u,-v+u])\right)$$
    $$ = \partial_v \left(\frac{1}{\int_{-\infty}^\infty\varphi} \int\limits_{-v-u}^{-v+u} \varphi\right)$$
    $$ = \frac{1}{\int_{-\infty}^\infty\varphi}(-\varphi(-v+u)+\varphi(-v-u))\le 0.$$
    This last inequality holds because $|-v+u|\le |-v-u|$ and
    $\varphi(x)$ is decreasing in $|x|$. Consequently, when $\bar{v}$ is
    increased to $v$, the measure of $[0,u]$ decreases confirming the
    first inequality. 
    This intuitively means that when a Gaussian distribution is shifted to
    the right then even the reflected Gaussian is shifted (if it was
    centered at a non-negative point).
    
    The second inequality to confirm is the following:
    $$\phi_v([0,u]) = \phi([-v-u,-v+u]) \ge
    \sigma_v([0,u]).$$
    We rearrange and cancel out as much as possible from the domain of integrations.
    \begin{align*}
      \left. \int_{-v-u}^{-v+u}\varphi \middle/ \int_{-\infty}^\infty \varphi \right. &\ge
  \left. \int_{-v}^{-v+u}\varphi \middle/ \int_{-v}^\infty \varphi \right.\\
  \int_{-v-u}^{-v+u}\varphi \cdot \int_{-v}^\infty \varphi &\ge
  \int_{-v}^{-v+u}\varphi \cdot \int_{-\infty}^\infty \varphi\\
  \left(\int_{-v-u}^{-v}\varphi + \int_{-v}^{-v+u}\varphi\right) \cdot \int_{-v}^\infty \varphi &\ge
  \int_{-v}^{-v+u}\varphi \cdot \left(\int_{-\infty}^{-v} \varphi + \int_{-v}^\infty \varphi\right)\\
  \int_{-v-u}^{-v}\varphi \cdot \int_{-v}^\infty \varphi &\ge
  \int_{-v}^{-v+u}\varphi \cdot \int_{-\infty}^{-v} \varphi\\
  \int_{-v-u}^{-v}\varphi \cdot \left(\int_{-v}^{-v+u} \varphi + \int_{-v+u}^\infty \varphi \right) &\ge
  \int_{-v}^{-v+u}\varphi \cdot \left(\int_{-\infty}^{-v-u} \varphi + \int_{-v-u}^{-v} \varphi\right)\\
  \int_{-v-u}^{-v}\varphi \cdot \int_{-v+u}^\infty \varphi &\ge
  \int_{-v}^{-v+u}\varphi \cdot \int_{-\infty}^{-v-u} \varphi
    \end{align*}

  We substitute the functions to integrate and transform them to compare them on
  the same domain.
\begin{align*}
  \int \limits_{-v-u}^{-v}e^{-A^2x^2}dx \cdot \int \limits_{-v+u}^\infty e^{-A^2y^2}dy &\ge
  \int \limits_{-v}^{-v+u}e^{-A^2x^2}dx \cdot \int \limits_{-\infty}^{-v-u} e^{-A^2y^2}dy\\
  \int \limits_{0}^{u}e^{-A^2(x+v)^2}dx \cdot \int \limits_{u}^\infty e^{-A^2(y-v)^2}dy &\ge
  \int \limits_{0}^{u}e^{-A^2(x-v)^2}dx \cdot \int \limits_{u}^\infty e^{-A^2(y+v)^2}dy\\
  \int \limits_{0}^{u} \int \limits_{u}^\infty e^{-A^2(x^2+y^2+2v^2 + 2v(x-y))} dy dx &\ge
  \int \limits_{0}^{u} \int \limits_{u}^\infty e^{-A^2(x^2+y^2+2v^2 - 2v(x-y))} dy dx
\end{align*}

  On all the domain of integration we have $x\le y$. Therefore the
  exponent is larger at every point for the left hand side, which
  confirms the second inequality, completing the proof of the lemma.
\end{proof}

Lemma \ref{lm:comp_mon_coupling} thus ensures that the monotone
coupling preserves the ordering, and we can indeed use the recursive
coupling scheme while keeping $Z(t)\le Y'_u(t)$ at every step.
\end{proof}

\begin{proposition}
  \label{prp:nu_m_bound}
  For any $\alpha_3>0$ there exists $\beta_3 > 0$ with the following. For large enough $A$ with probability at
  least $1-\alpha_3$ we have $\nu_m < \beta_3 A^2$.
\end{proposition}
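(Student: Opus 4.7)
My plan is to reduce the bound on $\nu_m$ to a hitting time estimate for the random walk $Z$, chaining together the three comparison tools already established in the section. First, Lemma \ref{lm:nu_m_variants} gives $P(\nu_m \ge \beta_3 A^2) \le P(\tilde{\nu}_m \ge \beta_3 A^2) + O(\exp(-\beta_2 A^2))$, so it suffices to bound $\tilde{\nu}_m$. Applying Lemma \ref{lm:Yprime_coupling} with $\alpha_1 := \beta_3$, the coupling between $Y$ and $Y'$ persists up to time $\min(\tilde{\nu}_m,\beta_3 A^2)$ except on an event of probability $O(\exp(-\beta_1 A^2))$; on this good event, either $\tilde{\nu}_m$ already occurs before $\beta_3 A^2$ (so we are done), or $Y$ and $Y'$ agree throughout $[0, \beta_3 A^2]$ and $Y'_u$ also fails to reach $1/2 - \delta$ in this window. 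Finally, by Proposition \ref{prp:couple_yprime_z}, $Z(t) \le Y'_u(t)$ pathwise, so in the latter case $Z(t) < 1/2 - \delta$ for every $t \le \beta_3 A^2$, and in particular $Z(\beta_3 A^2) < 1/2 - \delta$.

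It then remains to estimate this last probability. The random walk $\tilde Z$ has i.i.d.\ $\cN(0,1/(2A^2))$ increments starting from $\tilde Z(0) \in [0, 1/2-\delta)$, so $\tilde Z(\beta_3 A^2) \sim \cN(\tilde Z(0), \beta_3/2)$. The event $\{Z(\beta_3 A^2) < 1/2 - \delta\}$ asks this Gaussian to fall in the interval $(-(1/2-\delta), 1/2-\delta)$ of length $1-2\delta$, so the uniform density bound gives
\[
P(Z(\beta_3 A^2) < 1/2 - \delta) \le \frac{1-2\delta}{\sqrt{\pi \beta_3}},
\]
which is smaller than $\alpha_3 / 2$ for $\beta_3$ chosen large enough. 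Combining this estimate with the two exponentially small coupling error terms and taking $A$ large yields $P(\nu_m \ge \beta_3 A^2) \le \alpha_3$.

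The main obstacle is the careful threading of the stopping times and coupling failures — in particular, one must pick $\alpha_1$ in Lemma \ref{lm:Yprime_coupling} equal to the $\beta_3$ to be determined, so that the coupling is guaranteed throughout the window of interest (rather than only up to a pre-fixed horizon that might be shorter than $\beta_3 A^2$). The random walk estimate itself is routine, because the uniform Gaussian density bound collapses the running-maximum question to a single-time event at $t = \beta_3 A^2$.
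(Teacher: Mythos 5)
Your proof is correct and follows essentially the same route as the paper's: reduce $\nu_m$ to $\tilde{\nu}_m$ via Lemma~\ref{lm:nu_m_variants}, couple $Y$ to $Y'$ on the relevant window via Lemma~\ref{lm:Yprime_coupling} with $\alpha_1=\beta_3$, dominate $Y'_u$ from below by $Z$ via Proposition~\ref{prp:couple_yprime_z}, and finish with a single-time Gaussian estimate on $\tilde Z(\beta_3 A^2)$. The only cosmetic differences are that you avoid introducing the intermediate stopping time $\hat{\nu}_m$ and you make the Gaussian bound explicit as $(1-2\delta)/\sqrt{\pi\beta_3}$ rather than just citing that the probability tends to zero as $\beta_3\to\infty$.
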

\begin{proof}
  First we look at the hitting time analogous to $\tilde{\nu}_m$ for $Y'_u$ defined as $\hat{\nu}_m =
  \min \{s: Y'_u(s) \ge 1/2-\delta\}$.
  Without aiming for tight estimates $\hat{\nu}_m\le t$ can be ensured by
  $Y'_u(t)\ge 1/2-\delta$ and by Proposition \ref{prp:couple_yprime_z}
  this holds whenever $Z(t)\ge 1/2-\delta$. The latter is equivalent
  to $\tilde{Z}(t)\notin [-1/2+\delta,1/2-\delta]$.

  For some $\beta_3 >0$, the distribution of $\tilde{Z}(\beta_3A^2)$ is
  $\cN(X_u(0),\beta_3/2)$. Choosing $\beta_3$ large enough, the probability of
  this falling into
  $[-1/2+\delta,1/2-\delta]$ can be made below $\alpha_3/2$ and this event
  is a superset of $\hat{\nu}_m>\beta_3A^2$.

  Now apply Lemma \ref{lm:Yprime_coupling} with $\alpha_1 = \beta_3$. Note
  that $\tilde{\nu}_m \neq \hat{\nu}_m$ can only happen if $\nu_c^1 <\tilde{\nu}_m$.
  Also Lemma \ref{lm:nu_m_variants} ensures that $\nu_m$
  and $\tilde{\nu}_m$ almost always coincide.
  Altogether, we have $\nu_m = \tilde{\nu}_m = \hat{\nu}_m < \beta_3 A^2$ with an
  exceptional probability at most $O(\exp(-\beta_2 A^2)) + O(\exp(-\beta_1 A^2)) + \alpha_3/2$, this
  stays below $\alpha_3$ when $A$ is large enough, which completes the proof.
\end{proof}

\subsection{Diffusion from the middle}
\label{sec:mixmiddle}

In the previous subsection we have seen that the process $Y_u(t)$ eventually
has to reach the middle of the interval $[0,1]$ as formulated in
Proposition~\ref{prp:nu_m_bound}. Now we complement the analysis and consider the case
when the process is initialized from the middle, meaning $Y_u(0)
\in [1/2-\delta, 1/2+\delta]$.
Intuitively, we expect the process to evolve as a random walk with
independent Gaussian increments. However, we have to be careful as
boundary effects might alter the behavior of $Y_u(t)$ when it moves near the ends of the
interval $[0,1]$.
In this subsection we provide the techniques to estimate these
boundary effects which will allow to conclude that
the mixing of a random walk still translates to comparable mixing of $Y_u(t)$.

Let $W(t)$ be a random walk with i.i.d.\
$\cN(0,1/(2A^2))$ increments, starting from $W(0) := Y_u(0)$.
Our goal is to couple $W(t)$ with $Y_u(t)$ which only has a chance as
long as $W(t)$ stays within $[0,1]$.
\begin{definition}
  Let $\nu_c^2 := \min\{s:W(s)\notin [0,1]\}$.
\end{definition}

\begin{lemma}
  \label{lm:YWcoupling}
  % For $t < \nu_c^2$, $Y_u(t)=W(t)$ coupling is possible.
  There exist a coupling of the processes $Y_u$ and $W$ such that
  $Y_u(t)=W(t)$ whenever $t < \nu_c^2$.
\end{lemma}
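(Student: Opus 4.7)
The plan is to build the coupling step-by-step by rejection sampling. Recall that by definition $\pi(\cdot,y)$ is the law of a $\cN(y,1/(2A^2))$ random variable conditioned to lie in $[0,1]$. Since the Gaussian increment of $W$ at step $t$ has the unconditional version of exactly the same distribution, a single Gaussian draw can be used to drive both processes whenever the draw does not leave $[0,1]$, and we only need to resample for $Y_u$ when it does.

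More precisely, I would define the coupling inductively. Set $W(0)=Y_u(0)$. Assume $W(s)=Y_u(s)$ for all $s\le t-1$ (in particular $\nu_c^2>t-1$), and write $y:=Y_u(t-1)=W(t-1)\in[0,1]$. Draw a single sample $\xi\sim\cN(y,1/(2A^2))$ and set $W(t):=\xi$. If $\xi\in[0,1]$, set $Y_u(t):=\xi$; otherwise, set $Y_u(t)$ equal to an independent draw from $\pi(\cdot,y)$. If the equality of the two processes has already broken at some earlier step (so $t-1\ge \nu_c^2$), run the remaining dynamics of $W$ and $Y_u$ independently — this does not affect the conclusion.

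The only point to verify is that each marginal has the prescribed law. For $W(t)$ this is automatic, since $\xi$ is always used as the new value. For $Y_u(t)$, conditionally on $y$, the construction returns $\xi$ when $\xi\in[0,1]$ and otherwise a fresh sample from $\pi(\cdot,y)$; denoting $p:=\phi_y([0,1])$ (with $\phi_y$ the Gaussian in question), the resulting law is the mixture
\[
p\cdot\cL\big(\xi\mid \xi\in[0,1]\big)+(1-p)\cdot\pi(\cdot,y)=\pi(\cdot,y),
\]
since both components equal $\pi(\cdot,y)$ by definition of the conditional distribution. Thus $Y_u(t)\sim\pi(\cdot,Y_u(t-1))$ as required.

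By induction, as long as no drawn $\xi$ has left $[0,1]$ we have $W(s)=Y_u(s)$; the first time this fails is exactly $\nu_c^2$, so $Y_u(t)=W(t)$ for every $t<\nu_c^2$. No real obstacle arises — the lemma is essentially the standard identity equating rejection sampling to conditioning, and the only thing to be careful about is that we do not use the same Gaussian draw for $W$ and $Y_u$ once $W$ has exited $[0,1]$, which we handle by letting them evolve independently from that point on.
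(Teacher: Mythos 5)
Your proposal is correct and is essentially the paper's own argument: drive both chains with a single Gaussian increment, resampling for $Y_u$ from $\pi(\cdot,Y_u(t-1))$ only when the draw exits $[0,1]$, so the chains first separate exactly at $\nu_c^2$. Your explicit verification that the accept/reject scheme produces the correct marginal for $Y_u$ is a small helpful addition that the paper leaves implicit.
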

\begin{proof}
  Assume the coupling holds until $t-1$, having $Y_u(t-1)=W(t-1)$.
  Let $\zeta \sim \cN(0,1/(2A^2))$ be independent from the past, then
  define $W(t) = W(t-1)+\zeta$. For
  $Y_u(t)$, accept $Y_u(t-1)+\zeta$ if it is in $[0,1]$ otherwise
  redraw it according to $\pi(\cdot,Y_u(t-1))$.

  The same values are obtained for the two processes at $t$ except
  if $W(t)$ is outside $[0,1]$. This is exactly the
  event we wanted to indicate with $\nu_c^2$ when we allow the
  two processes to decouple.
\end{proof}

\begin{lemma}
  \label{lm:intervalbound}
  For any $\alpha_4 > 0$ there exists $\beta_4>0$
  with the following property.
  For $A$ large enough, if $Y_u(0)\in
  [1/2-\delta, 1/2+\delta]$ there holds $P(\nu_c^2 < \alpha_4A ^2) <
  \beta_4$. We also have $\beta_4\ra 0$ as we choose $\alpha_4\ra 0$.
\end{lemma}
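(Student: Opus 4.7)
The plan is to exploit the diffusive scaling of $W$. Its increments are i.i.d.\ $\cN(0,1/(2A^2))$, so after $n := \lceil \alpha_4 A^2 \rceil$ steps the displacement $W(n) - W(0)$ is Gaussian with variance $n/(2A^2) \approx \alpha_4/2$, hence standard deviation of order $\sqrt{\alpha_4}$. Since $W(0) = Y_u(0) \in [1/2-\delta, 1/2+\delta]$ lies at distance at least $1/2 - \delta$ from either endpoint of $[0,1]$, and this macroscopic distance dwarfs $\sqrt{\alpha_4}$ when $\alpha_4$ is small, the walk should rarely exit $[0,1]$ within $n$ steps.

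Concretely, I would first observe the inclusion
$$\{\nu_c^2 \le n\} \subseteq \left\{\max_{0 \le k \le n} |W(k) - W(0)| \ge 1/2 - \delta\right\}.$$
The main step is then to replace the running maximum by the single endpoint, which is done using L\'evy's maximal inequality for random walks with symmetric increments:
$$P\left(\max_{0 \le k \le n} |W(k) - W(0)| \ge r\right) \le 2\, P(|W(n) - W(0)| \ge r).$$
This reduces the problem to a single Gaussian tail estimate. Applying the standard bound to $W(n) - W(0) \sim \cN(0, n/(2A^2))$ with $r = 1/2 - \delta$ yields an upper bound of the form $C \exp(-(1/2-\delta)^2/\alpha_4)$ for some universal constant $C$, once $A$ is taken large enough that the rounding $n \approx \alpha_4 A^2$ is negligible. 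Setting $\beta_4$ equal to this bound gives both assertions of the lemma: it is finite for each $\alpha_4 > 0$, and the exponential factor sends $\beta_4 \to 0$ as $\alpha_4 \to 0$.

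There is essentially no serious obstacle here. The only verification needed is that L\'evy's inequality applies, which it does because Gaussian increments are symmetric. Alternatively one could use Doob's submartingale inequality with an exponential martingale of the form $\exp(\lambda(W(t)-W(0)) - \lambda^2 t/(4A^2))$ and optimize in $\lambda$ to obtain the same Gaussian concentration, but the L\'evy route is the most direct. The proof is in spirit the discrete random-walk analogue of estimating the exit time of a Brownian motion from an interval that strictly contains its starting point.
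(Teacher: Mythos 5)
Your proof is correct, and it takes a genuinely different route from the paper. The paper invokes the Erd\H os--Kac theorem, which describes the \emph{asymptotic} distribution of the running maximum of a random walk; this is why the paper needs the ``$A$ large enough'' hypothesis to push the finite-$n$ probability close to its limit. You instead use L\'evy's maximal inequality for symmetric increments, $P(\max_{k\le n}|W(k)-W(0)|\ge r)\le 2P(|W(n)-W(0)|\ge r)$, which is valid for every $n$, and since the increments are exactly Gaussian the right-hand side is exactly $2P(|\cN(0,n/(2A^2))|\ge r)$ with no appeal to a limit theorem. Numerically the two bounds agree to leading order: for Gaussian increments the Erd\H os--Kac limit is precisely the reflection-principle value $2P(|S_n|\ge r)$, so neither route is wasteful. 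The one thing the paper's form buys is that it is later plugged into the proof of Proposition~\ref{prp:dist_Y_uniform} as a sharp numerical expression (evaluated for $\alpha_5=0.10$ to get $\beta_4\approx 0.051$); your crude final display $C\exp(-(1/2-\delta)^2/\alpha_4)$ would be too lossy for that purpose, but your intermediate expression $2P(|W(n)-W(0)|\ge 1/2-\delta)$ already recovers the same constant, so this is purely cosmetic. Also a small note on your setup: $\{\nu_c^2<\alpha_4A^2\}\subseteq\{\nu_c^2\le n\}$ with $n=\lceil\alpha_4A^2\rceil$, so the inclusion you wrote is the one needed and is correct since exiting $[0,1]$ from a start in $[1/2-\delta,1/2+\delta]$ forces a displacement of magnitude at least $1/2-\delta$.
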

\begin{proof}
  We need to control the minimum and the maximum of a random walk
  where we use the following result of Erd\ho s and Kac \cite{erdos_kac:rw_max1946}: 
  \begin{theorem}[Erd\ho s-Kac]
    Let $\xi_1,\xi_,\ldots$ i.i.d.\ random variables, $\mE \xi_k = 0,~ D^2 \xi_k = 1$. Let
    $S_k = \xi_1 + \xi_2 + \ldots + \xi_k.$
    Then for any $\alpha \ge 0$
    $$\lim_{n\ra \infty} P(\max(S_1,S_2,\ldots,S_n) < \alpha \sqrt{n}) =
    \sqrt{\frac{2}{\pi}}\int_0^\alpha \exp(-x^2/2)dx.$$
  \end{theorem}
  Translating to the current situation, now that we use an initial value $Y_u(0)\in
  [1/2-\delta,1/2+\delta]$ as a reference, we want an upper bound on the
  probability that the partial sums generating $W(t)$ never exceed $1/2-\delta$ (nor they go
  below $-1/2+\delta$). The increments have variance $1/(2A^2)$ and the number
  of steps is $\alpha_4A^2$. Formally,
  \begin{align*}
    &P\left(\max(0,W(1)-W(0),\ldots, W(\alpha_4A^2)-W(0)) < 1/2-\delta\right) \\
    &= P\left(\max(0,W(1)-W(0),\ldots, W(\alpha_4A^2)-W(0))\sqrt{2}A <
    \frac{1-2\delta}{\sqrt{2\alpha_4}}\sqrt{\alpha_4A^2}\right) \\
    &\ra \sqrt{\frac{2}{\pi}}\int_0^{\frac{1-2\delta}{\sqrt{2\alpha_4}}}\exp(-x^2/2)dx.
  \end{align*}
  Now $\nu_c^2<\alpha_4A^2$ can only occur if this event fails and the
  maximum exceeds $1/2-\delta$, meaning $W(t)$ might exceed 1, or alternatively, the minimum of the process
  goes below $-1/2+\delta$ corresponding to $W(t)$ possibly leaving $[0,1]$
  at 0. Consequently, we may fix any small $\eps > 0$, then for any large enough $A$ we get
  \begin{equation}
    \label{eq:maxrw} 
    P(\nu_c^2 < \alpha_4 A^2) \le 2\left(1 -
      \sqrt{\frac{2}{\pi}}\int_0^{\frac{1-2\delta}{\sqrt{2\alpha_4}}}\exp(-x^2/2)dx
    \right) + \eps =: \beta_4.
  \end{equation}
  Observe that the right hand side of the expression indeed converges to
  0 as $\alpha_4\ra 0$.
\end{proof}

\begin{proposition}
  \label{prp:dist_Y_uniform}
  There exists a constant $\alpha_5 > 0$ such that for $A$ large enough, if $Y_u(0)\in
  [1/2-\delta, 1/2+\delta]$ we have 
  $$\|\cL(Y_u(\alpha_5A^2)) -  \pi_u\|_{\rm TV} < 1/3.$$
\end{proposition}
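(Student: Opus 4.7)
My plan is to compare $\cL(Y_u(T))$ at $T = \alpha_5 A^2$ with $\pi_u$ by using the free Gaussian random walk $W$ as an intermediary: couple $Y_u$ with $W$ via Lemma~\ref{lm:YWcoupling}, then approximate the resulting Gaussian distribution by $\pi_u$.

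First I would note that, by Lemma~\ref{lm:YWcoupling}, $Y_u(T) = W(T)$ whenever $\nu_c^2 > T$; combined with Lemma~\ref{lm:intervalbound}, this yields
\[
\|\cL(Y_u(T)) - \cL(W(T))\|_{\rm TV} \le P(\nu_c^2 \le T) \le \beta_4,
\]
with $\beta_4 = \beta_4(\alpha_5)$ tending to $0$ as $\alpha_5 \to 0$ (the convergence comes from the Erd\ho s--Kac calculation in Lemma~\ref{lm:intervalbound}).

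Next, the distribution $\cL(W(T))$ is the Gaussian $\cN(Y_u(0), \alpha_5/2)$ on $\mR$, and I would compare it with $\pi_u$ by routing through the uniform distribution on $[0,1]$. A direct computation on $\pi_u(u) = \cZ^{-1}\int_0^1 \varphi(u-v)\,dv$ shows that the marginal density is constant away from the endpoints and only exhibits a boundary correction in a layer of width $O(1/A)$, so the TV distance from $\pi_u$ to the uniform on $[0,1]$ is $O(1/A)$, which vanishes for large $A$. The remaining estimate is the TV distance between the Gaussian $\cN(Y_u(0), \alpha_5/2)$ and the uniform on $[0,1]$; this is a one-dimensional explicit calculation, splitting into the integral of $|1 - f_{\cN}(y)|$ over $[0,1]$ and the tail mass of the Gaussian outside $[0,1]$, both of which are expressible through the standard normal CDF. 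For $\delta$ taken small enough, the shift of the Gaussian's center from $1/2$ does not affect this TV estimate significantly.

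The hard part, in this plan, is selecting $\alpha_5$. The Gaussian-to-uniform TV distance cannot be driven to $0$ by any single choice of $\alpha_5$, because a Gaussian of any variance will differ noticeably from the uniform; meanwhile $\beta_4$ grows as $\alpha_5$ grows. I therefore expect $\alpha_5$ to be a moderate constant (neither vanishing nor large) chosen so that the sum of $\beta_4$, the Gaussian-to-uniform TV distance, and the $O(1/A)$ remainder sits below $1/3$ with some margin. This quantitative balance is feasible essentially because the target TV bound $1/3$ leaves sufficient slack; it is the reason the proposition is stated at a constant TV level rather than with a vanishing error.
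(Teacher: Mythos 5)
Your proposal matches the paper's proof essentially step for step: couple $Y_u$ with the free random walk $W$ via Lemmas~\ref{lm:YWcoupling} and~\ref{lm:intervalbound}, compare the resulting Gaussian with the uniform on $[0,1]$, compare the uniform with $\pi_u$ (vanishing in $A$), and combine by the triangle inequality, with $\alpha_5$ chosen to balance the coupling-failure term against the Gaussian-to-uniform distance. The paper simply carries out the final numerical step you describe, settling on $\alpha_5=0.10$ and verifying $\beta_4+\gamma+\eps<1/3$.
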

\begin{proof}
  We introduce $\alpha_5$ as a parameter. We will find sufficient
  conditions that ensure the claim of the proposition to hold, then pick
  a $\alpha_5$ that satisfies the conditions found.
  
  We first compare two simpler distributions, that of $W(\alpha_5A^2)$ and
  the uniform $\mu$. 
  By the definition of
  $W(t)$, the distribution of $W(\alpha_5A^2)$ is $\cN(Y_u(0),\alpha_5/2)$.
  $$\|\cL(W(\alpha_5A^2))-\mu\|_{\rm TV} =
  \frac{1}{2}\int_{-\infty}^\infty
  \left|\frac{\exp(-(x-Y_u(0))^2/\alpha_5)}{\sqrt{\alpha_5\pi}}-\mathbbm{1}_{[0,1]}(x)\right|dx$$
  The integrand has the form $|a-b|$ which we replace by
  $a+b-2\min(a,b)$ (knowing these variables are
  non-negative). Also, as the probability density functions integrate
  to 1, we get
  \begin{equation}
    \label{eq:W_mu_totalvar}
    \begin{aligned}
      &\|\cL(W(\alpha_5A^2))-\mu\|_{\rm TV} = 1 - \int_{-\infty}^\infty
      \min\left(\frac{\exp(-(x-Y_u(0))^2/\alpha_5)}{\sqrt{\alpha_5\pi}},\mathbbm{1}_{[0,1]}(x)\right)
      dx\\
      &= 1 - \int_0^1
      \min\left(\frac{\exp(-(x-Y_u(0))^2/\alpha_5)}{\sqrt{\alpha_5\pi}},1\right)
      dx\\
      &\le 1+2\delta - \int_{-\delta}^{1+\delta}
      \min\left(\frac{\exp(-(x-1/2)^2/\alpha_5)}{\sqrt{\alpha_5\pi}},1\right)
      dx =: \gamma.
    \end{aligned}
  \end{equation}
  The last inequality follows because the constant term is increased
  by $2\delta$, so is the length of the domain of the integration but
  the integrand is bounded above by 1. This step also involves an implicit change of
  variable depending on $Y_u(0)$, and it results in a final expression
  independent of this starting condition. The $\gamma$ we get is also
  independent of $A$, it does depend on $\delta$ but has a limit as
  $\delta\ra 0$.

  The claim of the lemma is about two other distributions, now we relate them to
  the ones just compared. Using Lemma~\ref{lm:intervalbound} for
  $\alpha_4=\alpha_5$ we know that
  $Y_u(t)$ and $W(t)$ can be coupled well up to $t=\alpha_5A^2$, which
  directly implies
  \begin{equation}
    \label{eq:Y_W_totalvar}
    \|\cL(Y_u(\alpha_5A^2))-\cL(W(\alpha_5A^2))\|_{\rm TV} \le \beta_4,
  \end{equation}
  where $\beta_4$ is the constant given by Lemma~\ref{lm:intervalbound}.

  To compare $\pi_u$ with $\mu$ we show $\pi_u$ converges to $\mu$ in
  total variation as $A\ra\infty$.
  For every $x\in [0,1]$ define
  $$p_u(x) = \frac{A}{\sqrt{\pi}}\int_{-x}^{1-x}\varphi(y)dy,$$
  this is a
  function proportional to the density of $\pi_u$. By standard
  Gaussian tail estimates for all $x\in (0,1)$ we get
  $$1 - \frac{\exp(-A^2x^2)}{2\sqrt{\pi}Ax} -
  \frac{\exp(-A^2(1-x)^2)}{2\sqrt{\pi}A(1-x)} \le p_u(x) \le 1.$$
  Hence for all $x\in (0,1),~p_u(x)\ra 1$ as $A\ra\infty$. These
  are uniformly bounded functions, so $\int_0^1p_u\ra 1$. The
  expression to consider for the convergence of the distributions is
  $$\|\mu - \pi_u\|_{\rm TV} = \frac{1}{2}\int_0^1
  \left|1-\frac{p_u(x)}{\int_0^1 p_u}\right|dx.$$
  Here $1/\int_0^1 p_u$ is converging to 1 and is therefore bounded
  after some threshold, so the functions are eventually uniformly bounded and pointwise converging to
  0. Thus the integrals also converge, and we get
  \begin{equation}
    \label{eq:pi_mu_limit}
    \lim_{A\ra\infty}\|\mu - \pi_u\|_{\rm TV} = 0.
  \end{equation}

  We can now combine our bounds of \eqref{eq:W_mu_totalvar},
  \eqref{eq:Y_W_totalvar} and \eqref{eq:pi_mu_limit}:
  \begin{align*}
    \|\cL(Y_u(\alpha_5A^2)) -  \pi_u\|_{\rm TV} &\le \|\cL(Y_u(\alpha_5A^2)) -
                                              \cL(W(\alpha_5A^2))\|_{\rm TV} + \|\cL(W(\alpha_5A^2))-\mu\|_{\rm TV}\\
                                            &+\|\mu - \pi_u\|_{\rm TV}
                                              < \beta_4 + \gamma + \eps,
    \end{align*}
  where $\eps>0$ can be as small as wanted by setting $A$ large
  enough. The proposition holds if we can ensure this sum to be small enough.

  Note that a strong compromise is present for the choice of
  the constant $\alpha_5$. In \eqref{eq:Y_W_totalvar} we want to limit how likely the boundaries of the
  unit interval are to be reached, at the same time in \eqref{eq:W_mu_totalvar} we want to show that
  $Y_u(s)$ is already spread out to some extent.

  Still, a specific choice is possible. For $\alpha_5 = 0.10$ Lemma
  \ref{lm:intervalbound} provides $\beta_4\approx 0.051$ when using
  $\delta=\eps=0$ and computer calculations for \eqref{eq:maxrw}. By choosing $\delta,
  \eps >0$ but small enough, trusting computers but not too much, we
  can safely say $\beta_4 < 0.06$. In \eqref{eq:W_mu_totalvar} using
  the same choice of $\alpha_5$ we numerically get $\gamma \approx 0.263$ for
  $\delta=\eps=0$. Once again we allow a
  safety margin to only claim $\beta_4+\gamma+\eps<1/3$.
\end{proof}

\section{Overall mixing}
\label{sec:mixproof}

We are now ready to establish mixing time bounds for the process we
understand the best, $Y_u(t)$, then we will translate those results to
the original process of interest $X(t)$.

Let us define
$$
d(t) := \sup_{Y_u(0)\in[0,1]} \|\cL(Y_u(t))-\pi_u\|_{\rm TV},
$$
which measures the distance from the stationary distribution from the
worst starting point. We can give good bounds based on the previous
sections:

\begin{lemma}
  \label{lm:d_decrease}
  There exists a constant $\beta_6>0$ such that $d(\beta_6 A^2) < 4/9$.
\end{lemma}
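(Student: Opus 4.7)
The plan is to glue Proposition~\ref{prp:nu_m_bound} (fast entry into the middle slab) to Proposition~\ref{prp:dist_Y_uniform} (fast approach to $\pi_u$ once the chain starts in the middle slab) via the strong Markov property at the random time $\nu_m$.

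First I would pick any $\alpha_3$ strictly less than $1/9$, say $\alpha_3 = 1/10$, and let $\beta_3$ be the constant supplied by Proposition~\ref{prp:nu_m_bound}. Proposition~\ref{prp:nu_m_bound} is stated for $Y_u(0) < 1/2-\delta$, but the density $\varphi(u-v)$ is invariant under $(u,v)\mapsto(1-u,1-v)$, so the process $Y_u$ is symmetric about $1/2$ on $[0,1]$ and the same estimate applies for $Y_u(0) > 1/2+\delta$; for $Y_u(0)\in[1/2-\delta,1/2+\delta]$ the definition gives $\nu_m=0$. Hence the event $G := \{\nu_m \le \beta_3 A^2\}$ satisfies $P(G) \ge 1-\alpha_3$ for every starting point.

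Next, set $\beta_6 := \beta_3 + \alpha_5$ with $\alpha_5$ as in Proposition~\ref{prp:dist_Y_uniform}, and let $T := \beta_6 A^2$. On $G$, the state $Y_u(\nu_m)$ lies in $[1/2-\delta,1/2+\delta]$, and by the strong Markov property at $\nu_m$ the chain $(Y_u(\nu_m+k))_{k\ge 0}$ is a fresh copy of $Y_u$ started from that middle point. Proposition~\ref{prp:dist_Y_uniform} then gives, for every admissible value $y_0$ of $Y_u(\nu_m)$ and every $s\le\beta_3A^2$,
\[
\bigl\|\mathcal{L}\bigl(Y_u(s+\alpha_5 A^2)\,\big|\,\nu_m=s,\,Y_u(s)=y_0\bigr) - \pi_u\bigr\|_{\rm TV} < 1/3.
\]
Since total variation distance to the stationary measure is non-increasing under the Markov kernel of $Y_u$, the additional $T-s-\alpha_5 A^2\ge 0$ steps leave the bound $<1/3$ intact, giving the same estimate at the deterministic time $T$.

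Finally I would decompose over $G$ and $G^c$: for every measurable $S\subseteq[0,1]$,
\[
\bigl|P(Y_u(T)\in S) - \pi_u(S)\bigr| \le P(G^c) + P(G)\cdot \sup_{s,y_0 \text{ on } G}\bigl\|\mathcal{L}(Y_u(T)\mid\nu_m=s,Y_u(s)=y_0)-\pi_u\bigr\|_{\rm TV},
\]
whence $d(T) \le \alpha_3 + 1/3 \le 1/10 + 1/3 = 13/30 < 4/9$, and $\beta_6 = \beta_3+\alpha_5$ works. There is no real obstacle; the only points requiring care are invoking symmetry to remove the WLOG in Proposition~\ref{prp:nu_m_bound}, handling the random gap between $\nu_m+\alpha_5A^2$ and the deterministic $T$ via the contraction of TV under a Markov kernel, and choosing $\alpha_3$ strictly below $1/9$ so that $\alpha_3+1/3<4/9$.
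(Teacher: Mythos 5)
Your proposal is correct and follows essentially the same route as the paper: condition on $\nu_m$, use Proposition~\ref{prp:nu_m_bound} to control $P(\nu_m > \beta_3 A^2)$, apply Proposition~\ref{prp:dist_Y_uniform} after $\nu_m$, absorb the leftover steps via the monotonicity of TV distance under the kernel, and set $\beta_6 = \beta_3 + \alpha_5$. The only cosmetic differences are that you take $\alpha_3 = 1/10$ for extra slack (the paper uses $\alpha_3 = 1/9$, where strictness comes from the strict $<1/3$ in Proposition~\ref{prp:dist_Y_uniform}), and you spell out the reflection symmetry needed to apply Proposition~\ref{prp:nu_m_bound} from an arbitrary start, which the paper leaves implicit.
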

\begin{proof}
  Intuitively, from any starting point we can first wait for the
  process to reach the middle and then let the diffusion happen from there, as these
  are components we can already control.

  Let us apply
  Proposition~\ref{prp:nu_m_bound} with $\alpha_3 = 1/9$ providing a
  certain $\beta_3$. Once the process is in the middle part
  $[1/2-\delta,1/2+\delta]$ we know by
  Proposition~\ref{prp:dist_Y_uniform} that in the subsequent $\alpha_5A^2$
  steps sufficient diffusion occurs. Let $\beta_6=\beta_3+\alpha_5$.

  Formally, fix $Y_u(0)\in [0,1]$. We perform our calculations by
  conditioning on the value of $\nu_m$.
  $$
  \|\cL(Y_u((\beta_3+\alpha_5)A^2))-\pi_u \|_{\rm TV} =
  \left\|\sum_{s=0}^{\infty}P(\nu_m=s)
    \cL(Y_u((\beta_3+\alpha_5)A^2)~|~\nu_m=s) - \pi_u \right\|_{\rm TV}.
  $$
  Conditioned on $\nu_m=s$, $Y_u(s)\in [1/2-\delta,1/2+\delta]$,
  therefore Proposition~\ref{prp:dist_Y_uniform} provides
  $\|\cL(Y_u(s+\alpha_5A^2)~|~\nu_m=s)-\pi_u\|_{\rm TV} < 1/3$. We use this for $s\le
  \beta_3A^2$, then performing $\beta_3A^2-s$
  more steps can only decrease this distance, see
  \cite[Chapter~4]{levin:2009markov} for a detailed discussion about this. For $s>\beta_3A^2$ we use
  the trivial bound on the total variation distance. We get
  $$
  \|\cL(Y_u((\beta_3+\alpha_5)A^2))-\pi_u \|_{\rm TV} \le \sum_{s=0}^{\beta_3A^2} P(\nu_m=s) \cdot \frac{1}{3}+
  P(\nu_m>\beta_3A^2)\cdot 1 \le \frac{1}{3} + \alpha_3 = \frac{4}{9}.
  $$
\end{proof}

A slight variation of $d(t)$ compares the distribution of the process
when launched from two different starting points.
$$
\bar{d}(t) := \sup_{Y^1_u(0),Y^2_u(0)\in[0,1]} \|\cL(Y^1_u(t))-\cL(Y^2_u(t))\|_{\rm TV},
$$
Standard results provide the inequalities $d(t) \le \bar{d}(t) \le 2d(t)$ and the
submultiplicativity $\bar{d}(s+t)\le\bar{d}(s)\bar{d}(t)$, see
\cite[Chapter~4]{levin:2009markov}. The results therein are given for finite
state Markov chains
but are straightforward to translate to the current case of absolutely
continuous distributions and transition kernels.

\begin{proposition}
  \label{prp:Yu_mixing}
  For any $0<\alpha_7<1$ there exists $\beta_7>0$ such that
  $$t_{\rm mix}(Y_u, \alpha_7) \le \beta_7 A^2.$$
\end{proposition}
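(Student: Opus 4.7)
The plan is to bootstrap Lemma~\ref{lm:d_decrease} into an arbitrarily small total variation bound via the standard submultiplicativity trick for $\bar d$. First I would apply the relation $\bar d(t) \le 2 d(t)$ to the conclusion of Lemma~\ref{lm:d_decrease}, obtaining
\[
\bar d(\beta_6 A^2) \;\le\; 2 d(\beta_6 A^2) \;<\; \tfrac{8}{9} \;<\; 1.
\]
This is the crucial gap below 1 that makes iteration effective.

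Next I would invoke submultiplicativity $\bar d(s+t) \le \bar d(s)\bar d(t)$ iteratively. Setting $T = \beta_6 A^2$, induction on $k$ gives $\bar d(kT) \le (8/9)^k$, and hence also $d(kT) \le \bar d(kT) \le (8/9)^k$.

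Now for any fixed $0<\alpha_7<1$, I would choose the smallest positive integer $k$ with $(8/9)^k < \alpha_7$; this $k$ depends only on $\alpha_7$. Setting $\beta_7 := k \beta_6$, the definition of $d(t)$ as a sup over initial conditions yields $t_{\rm mix}(Y_u,\alpha_7) \le k \beta_6 A^2 = \beta_7 A^2$, as required.

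There is no serious obstacle here: the work was already done in Lemma~\ref{lm:d_decrease} and in the general properties of $d$ and $\bar d$ cited from \cite[Chapter~4]{levin:2009markov}. The only minor subtlety is that those textbook inequalities are stated for finite Markov chains, so I would either remark (as the paper already has) that the extension to absolutely continuous kernels on $[0,1]$ is routine, or cite the corresponding statement in that reference. The whole argument is thus a short three-line computation once Lemma~\ref{lm:d_decrease} is in hand.
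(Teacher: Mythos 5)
Your proposal is correct and follows essentially the same route as the paper: apply $\bar d \le 2d$ to Lemma~\ref{lm:d_decrease} to get $\bar d(\beta_6 A^2) \le 8/9$, iterate via submultiplicativity to obtain $d(k\beta_6 A^2) \le (8/9)^k$, and pick $k$ so that $(8/9)^k < \alpha_7$. The paper's choice $k = \lceil \log\alpha_7 / \log(8/9)\rceil$ is exactly your ``smallest such $k$.''
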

\begin{proof}
  Using Lemma~\ref{lm:d_decrease} for any $k\ge 1$ we get
  $$d(k \beta_6 A^2) \le \bar{d}(k \beta_6 A^2) \le (\bar{d}(\beta_6 A^2))^k
  \le (2d(\beta_6 A^2))^k \le \left(\frac{8}{9}\right)^k.$$
  For $k = \lceil\log \alpha_7 / \log (8/9)\rceil$ this is less than
  $\alpha_7$ thus by setting
  $\beta_7 = \beta_6 \lceil \log \alpha_7 / \log (8/9)\rceil$  the
  process will be close enough to the stationary distribution as
  required at $t=\beta_7A^2$.
\end{proof}

\begin{lemma}
  \label{lm:Yu_Y_samemixing}
  The mixing time of $Y_u$ and $Y$ are nearly the same, for any
  $0<\alpha_7<1$
  $$t_{\rm mix}(Y, \alpha_7) = t_{\rm mix}(Y_u, \alpha_7) + 1.$$
\end{lemma}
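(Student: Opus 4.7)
\emph{Proof proposal.} The approach is to exploit the fact that for $t \geq 1$ the full state of $Y$ is essentially a two-step window of $Y_u$. Unfolding the definition $Y(t) = (u^+, Y_u(t-1))$ with $u^+ = Y_u(t)$ yields
$$Y(t) = (Y_u(t), Y_u(t-1)) \qquad \text{for all } t \ge 1,$$
so the distribution of $Y(t)$ is fully determined by that of $Y_u(t-1)$: if $Y_u(t-1) \sim \mu$ then $Y(t) \sim \mu \otimes K$, where $K(v,\cdot) := \pi(\cdot, v)$ is the $\pi$-conditional kernel. Taking $\mu = \pi_u$ gives back $\pi = \pi_u \otimes K$, consistent with $\pi$ being stationary for $Y$.

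The key identity I would establish is
$$\|\cL(Y(t)) - \pi\|_{\rm TV} = \|\cL(Y_u(t-1)) - \pi_u\|_{\rm TV} \qquad (t \ge 1).$$
For the $\le$ direction, write $(\cL(Y(t)) - \pi)(S) = \int f_S(v)\,(\mu - \pi_u)(dv)$ with $f_S(v) := K(v, S_v)$ and $S_v$ the $v$-section of $S \subseteq [0,1]^2$. Since $0 \le f_S \le 1$, the standard dual characterisation of total variation gives $|(\cL(Y(t)) - \pi)(S)| \le \|\mu - \pi_u\|_{\rm TV}$ uniformly in $S$. For the $\ge$ direction I would test against rectangles $S = [0,1] \times B$: since $K(v,[0,1]) = 1$, one obtains $\cL(Y(t))(S) = \mu(B)$ and $\pi(S) = \pi_u(B)$, and taking the supremum over $B$ recovers $\|\mu - \pi_u\|_{\rm TV}$. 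This is the only step where the deterministic identity $Y_v(t) = Y_u(t-1)$ really gets used.

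To finish, I would take the supremum over initial conditions. Since $Y_u(0)$ is the first coordinate of $Y(0)$ and the second coordinate becomes irrelevant after one step, $\sup_{Y(0)\in[0,1]^2} \|\cL(Y(t)) - \pi\|_{\rm TV} = d(t-1)$ for all $t \ge 1$, with $d$ as defined before Lemma~\ref{lm:d_decrease}. Combined with the trivial observation that any deterministic start of $Y$ sits at total-variation distance $1$ from the atomless $\pi$ (forcing $t_{\rm mix}(Y,\alpha_7) \ge 1$), the smallest $t$ making $d(t-1) \le \alpha_7$ is exactly $t_{\rm mix}(Y_u,\alpha_7)+1$. No serious obstacle is expected beyond the $\ge$ half of the total-variation identity above, which is where the precise structure of $Y$ as a pair of consecutive $Y_u$-values matters.
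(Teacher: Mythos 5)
Your proposal is correct and rests on the same structural observation as the paper: for $t\ge 1$ the second coordinate of $Y(t)$ equals $Y_u(t-1)$, and the first coordinate is obtained by a fixed Markov kernel $K$ applied to the second, while $\pi = \pi_u\otimes K$. The only genuine difference is methodological and cosmetic: you make the exact identity $\|\cL(Y(t))-\pi\|_{\rm TV} = \|\cL(Y_u(t-1))-\pi_u\|_{\rm TV}$ explicit and prove the ``$\le$'' half by the dual (functional) characterization of total variation, whereas the paper proves that half by extending an optimal coupling of $Y_u(t)$ with $\pi_u$ through the common transition step. Both amount to the standard fact that post-composition by a Markov kernel cannot increase total variation distance; you have simply chosen the analytic phrasing over the probabilistic one. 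Your explicit treatment of the $t=0$ case (deterministic start, atomless $\pi$, hence distance $1$) is a clean way to pin down the ``$+1$'' which the paper leaves a little more implicit. No gap.
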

\begin{proof}
  First, we use that the total variation distance between the marginals is at
  most the distance between the overall distributions. Consequently,
  for any $t$ we have $\|\cL(Y_u(t-1))-\pi_u\|_{\rm TV} \le
  \|\cL(Y(t))-\pi\|_{\rm TV}$. This gives $t_{\rm mix}(Y, \alpha_7)
  \ge t_{\rm mix}(Y_u, \alpha_7) + 1$.

  For the other direction, assume $\|\cL(Y_u(t))-\pi_u\|_{\rm TV} \le
  \alpha_7$ for some $t$. This means there is an optimal coupling with a
  random variable $\tilde{Y}_u^1$ having distribution $\pi_u$ such that
  $P(Y_u(t) \neq \tilde{Y}_u^1) \le \alpha_7$. As $\tilde{Y}_u^1$ has
  distribution $\pi_u$, it is possible to draw an additional random
  variable $\tilde{Y}_u^2$ to get
  $(\tilde{Y}_u^2,\tilde{Y}_u^1)$ with distribution $\pi$.

  This is the same step when generating $Y_u(t+1)$ from $Y_u(t)$ thus
  we may keep the above coupling whenever already present. Therefore we have
  $P\big((Y_u(t+1),Y_u(t)) \neq (\tilde{Y}_u^2,\tilde{Y}_u^1)\big)\le
  \alpha_7$ which can also be written as $\|\cL(Y(t+1))-\pi\|_{\rm TV} \le
  \alpha_7$. This implies $t_{\rm mix}(Y, \alpha_7) \le t +1$, completing the proof.  
\end{proof}

We are now ready to prove the main theorem of the paper, as stated
in the introduction.

\newtheorem*{thm:Xmix}{Theorem \ref{thm:Xmix}}
\begin{thm:Xmix}
  Let $X(t)$ be the coordinate Gibbs sampler for the diagonal
  distribution. For any $0<\alpha<1$ there exists $\beta > 0$ such that for large
  enough $A$
  $$t_{\rm mix}(X, \alpha) \le \beta A^2.$$
\end{thm:Xmix}
\begin{proof}
  We use Proposition~\ref{prp:Yu_mixing} with $\alpha_7=\alpha/2$ and get a
  constant $\beta_7$ such that $t_{\rm mix}(Y_u,\alpha/2)\le \beta_7A^2$ and by
  Lemma~\ref{lm:Yu_Y_samemixing} also $t_{\rm mix}(Y, \alpha/2)\le
  \beta_7A^2 + 1$. At each step the distribution of $X^*$ and $Y$ might differ
  only by flipping along the diagonal, which does not change the
  distance from the (symmetric) $\pi$ thus also leaves the mixing time
  the same so we get $t_{\rm mix}(X^*, \alpha_7/2)\le \beta_7A^2 +1$.

  The definition of $X^*$ was based on the observation that when the
  same coordinate is rerandomized repeatedly, no additional mixing
  happens and the values at that coordinate simply get
  overwritten. Let us now quantify this effect, counting how many
  times did the direction of randomization change:
  $$
  N(t):=|\{s~:~1\le s \le t-1,~r(s)\neq r(s+1)\}|.
  $$
  With this notation we see that $\cL(X(t) ~|~ N(t)=k,r(1)=V) =
  \cL(X^*(k+1))$ for all $t\ge 1$.

  Without the loss of generality we now focus on the case of
  $r(1)=V$. Let us express the distribution of $X(t)$ conditioning on
  the value of $N(t)$.
  $$
  \cL(X(t)~|~r(1)=V) = \sum_{k=0}^{t-1}P(N(t)=k)\cL(X^*(k+1)) =
  \sum_{k=0}^{t-1}\frac{1}{2^{t-1}}{t-1 \choose k}\cL(X^*(k+1)).
  $$
  We substitute $t=3\beta_7 A^2$ and evaluate the total variation
  distance from $\pi$.  
  \begin{align*}
    &\|\cL(X(3\beta_7 A^2)~|~r(1)=V)-\pi\|_{\rm TV} \le
    \sum_{k=0}^{t-1}\frac{1}{2^{t-1}}{t-1 \choose k} \|\cL(X^*(k+1)) -
    \pi\|_{\rm TV}\\
    &\le P(Binom(3\beta_7 A^2-1,1/2) < \beta_7A^2) \cdot 1 + 1
      \cdot \|\cL(X^*(\beta_7A^2+1)) - \pi\|_{\rm TV}\\
    &\le \exp(-\eps\beta_7 A^2) + \frac{\alpha}{2}.
  \end{align*}
  The last line holds with some positive $\eps$ by Hoeffding's inequality for the Binomial
  distribution and by substituting the upper bound on the total variation
  distance when we know $k$ is above the mixing time.
  For large enough $A$ this is below $\alpha$.

  By symmetry, the same bound holds for $\cL(X(3\beta_7A^2)~|~r(1)=U)$ and by
  convexity it is also true for the mixture of the two, the
  unconditional distribution of $X(3\beta_7A^2)$. This concludes the proof
  with $\beta=3\beta_7$.
\end{proof}

Finally, let us comment on the multitude of constants
$\alpha_i,\beta_i$ appearing throughout the proofs, verifying that they
can be consistently chosen when needed. First, a small enough $\delta>0$
has to be picked for the proof of Proposition
\ref{prp:dist_Y_uniform} which also relies on Lemma \ref{lm:intervalbound}. Once
it is fixed, observe that in the remaining Sections \ref{sec:middle}
and \ref{sec:mixproof} all the constants only depend on other ones with lower indices,
with the last $\alpha, \beta$ of Theorem \ref{thm:Xmix} also depending on some previous ones.
This excludes the issue of circular dependence.

\section{Further estimates}
\label{sec:others}

In this section we complement the main result Theorem~\ref{thm:Xmix}
by a lower bound showing that the order of $A^2$ is exact and by
demonstrating the evolution of the distribution via numerical simulations.

Such a lower bound is plausible once having Lemma~\ref{lm:YWcoupling} and
Lemma~\ref{lm:intervalbound}, these roughly say that when starting from the
middle $Y_u$ behaves like a random walk for order of $A^2$ steps and
reaches only constant distance in order of $A^2$ steps. Let us proceed
by forming a formal argument.

\begin{theorem}
  \label{thm:lowerbound}
  Let $X(t)$ be the coordinate Gibbs sampler for the diagonal distribution.
  There exists constants $\alpha', \beta' > 0$ such that for large
  enough $A$
  $$ t_{\rm mix}(X,\alpha') > \beta'A^2. $$
\end{theorem}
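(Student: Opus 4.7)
The plan is to exhibit a starting point and a test set $S$ such that $X(t)$ stays inside $S$ with probability close to $1$ for $t=O(A^2)$ while $\pi(S)$ is bounded away from $1$. Take $X(0)=(1/2,1/2)$ and $S=[1/4,3/4]^2$. Choosing $S$ symmetric under the diagonal flip is convenient because indicator probabilities of $S$ are then preserved by all the transformations used to pass between $Y$, $X^*$ and $X$. For the stationary mass, $\pi$ concentrates on an $O(1/A)$-wide strip around the diagonal and $\pi_u$ converges to the uniform distribution on $[0,1]$ (as in \eqref{eq:pi_mu_limit}), so $\pi(S)\to 1/2$ as $A\to\infty$.

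Set $T=\beta_0 A^2$ with a small constant $\beta_0>0$ to be tuned. By Lemma~\ref{lm:YWcoupling}, couple $Y_u$ started from $1/2$ with the Gaussian random walk $W$ of increment variance $1/(2A^2)$ started at $1/2$, so that $Y_u(s)=W(s)$ on the event $\{\nu_c^2>s\}$. The variance of $W(s)-1/2$ is at most $\beta_0/2$ for $s\le T$, so L\'evy's reflection inequality for symmetric random walks gives
$$P\Big(\max_{0\le s\le T}|W(s)-1/2|>1/4\Big)\le 4\,P(|W(T)-1/2|>1/4),$$
which can be made smaller than any prescribed $\eta>0$ by choosing $\beta_0$ small. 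On the complement of this event $W$ never leaves $[1/4,3/4]\subset[0,1]$, so $\nu_c^2>T$ and the coupling gives $Y_u(s)=W(s)\in[1/4,3/4]$ for every $s\le T$. Since $Y_v(s)=Y_u(s-1)$ by construction, $Y(s)\in S$ for every $1\le s\le T$ on this event, and by flip-invariance of $S$ the same holds for $X^*(s)$ (the two processes differ only by a flip at odd times).

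To transfer from $X^*$ to $X$, invoke the mixture identity used in the proof of Theorem~\ref{thm:Xmix}: conditional on $r(1)$,
$$\cL(X(t)\mid r(1))=\sum_{k=0}^{t-1}P(N(t)=k\mid r(1))\,\cL(X^*(k+1))$$
(up to a flip ignored by $S$). Taking $t=T$ forces $k+1\le T$ in every term, so the previous paragraph yields $P(X(t)\in S\mid r(1))\ge 1-\eta$, and by symmetry the same bound holds unconditionally. Therefore
$$\|\cL(X(t))-\pi\|_{\rm TV}\ge P(X(t)\in S)-\pi(S)\ge 1/2-\eta-o_A(1),$$
with the $o_A(1)$ term tending to zero as $A\to\infty$. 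Fixing $\beta_0$ so that $\eta<1/8$ and setting $\alpha'=1/4$, $\beta'=\beta_0$ yields $t_{\rm mix}(X,\alpha')>\beta' A^2$ for all sufficiently large $A$.

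The main technical point is obtaining a uniform-in-time bound on $W$ over $[0,T]$ rather than a single-time bound, which L\'evy's reflection inequality supplies cleanly for Gaussian walks. The only other subtlety---the diagonal flips appearing when relating $Y$, $X^*$ and $X$---is neutralised by choosing the flip-invariant test set $S$.
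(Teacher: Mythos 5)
Your proof is correct, and it shares the same backbone as the paper's argument: start $X$ at $(1/2,1/2)$, run the shifted process $Y$, couple $Y_u$ with the Gaussian random walk $W$ via Lemma~\ref{lm:YWcoupling}, and transfer to $X$ through the $N(t)$-mixture identity, exploiting flip-invariance of the test set. The differences are all in the choice of primitives. First, you take the centered square $S=[1/4,3/4]^2$ and show $X(T)$ lands there with probability close to $1$ while $\pi(S)$ is bounded away from $1$; the paper instead takes the two corner squares $[0,1/4]^2\cup[3/4,1]^2$, shows $X(T)$ reaches them with probability below $1/16$, and proves $\pi$ gives them mass at least $1/8$ by a combinatorial 4-by-4 tiling argument (Lemma~\ref{lm:bigS}). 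Your stationary bound can actually be made very clean without the diagonal-concentration remark: $\pi(S)\le P(\pi\text{-sample has }u\in[1/4,3/4])=\pi_u([1/4,3/4])\to 1/2$ directly from~\eqref{eq:pi_mu_limit}, so no two-dimensional analysis is needed at all. Second, you control $\max_{s\le T}|W(s)-1/2|$ via L\'evy's reflection inequality, which is a non-asymptotic bound and slightly cleaner than the paper's use of the Erd\H os--Kac theorem (which gives the limiting distribution of the running max); the constant you wrote is $4$ where the usual statement gives $2$, but that is harmless as it only needs to tend to zero with $\beta_0$. Both routes are valid; yours avoids the tiling lemma at the cost of leaning on the asymptotic $\pi_u\to\mu$, whereas the paper's version produces an explicit non-asymptotic mass bound $\pi(S)\ge 1/8$.
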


First of all, to bound the mixing time from below it is sufficient
to give a lower bound on the number of steps needed for a single starting point. In this spirit,
we set $X(0) = (1/2,1/2)$. With this choice, the arguments in
Section~\ref{sec:mixmiddle} can be applied.

Set $S = [0,1/4]^2 \cup [3/4,1]^2$. Once we prove
$\pi(S)-P(X(\beta'A^2)\in S) > \alpha'$ for a proper choice of
$\alpha',\beta'$
that warrants a large total variation distance at the time $\beta'A^2$
and confirms our bound
for the mixing time.

\begin{lemma}
  \label{lm:bigS}
  $\pi(S) \ge 1/8.$
\end{lemma}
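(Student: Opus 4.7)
The plan is to exploit the diagonal concentration of $\pi$: the mass of $\pi$ lives essentially in an $O(1/A)$-neighborhood of the line $u=v$, and a constant fraction of that neighborhood sits inside $S$. As a first reduction I would apply the symmetry $(u,v)\mapsto (1-u,1-v)$ of $\pi$, which immediately gives $\pi([0,1/4]^2) = \pi([3/4,1]^2)$, so the claim $\pi(S)\ge 1/8$ is equivalent to $\pi([0,1/4]^2) \ge 1/16$.

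Next I would collapse the two-dimensional integrals defining
$$\pi([0,1/4]^2) = \cZ^{-1}\int_0^{1/4}\int_0^{1/4}\varphi(u-v)\,du\,dv$$
into one-dimensional ones by integrating out the anti-diagonal direction. For any $0<L\le 1$, a direct substitution yields
$$\int_0^L\int_0^L\varphi(u-v)\,du\,dv \;=\; \int_{-L}^L(L-|s|)\,\varphi(s)\,ds,$$
so the problem reduces to comparing $\int_{-1/4}^{1/4}(1/4-|s|)\varphi(s)\,ds$ with $\cZ = \int_{-1}^{1}(1-|s|)\varphi(s)\,ds$.

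Finally I would bound these in opposite directions. In the numerator, restricting to $|s|\le 1/8$ keeps the triangular factor $1/4-|s|$ bounded below by $1/8$, so
$$\int_0^{1/4}\int_0^{1/4}\varphi(u-v)\,du\,dv \;\ge\; \tfrac{1}{8}\int_{-1/8}^{1/8}\varphi(s)\,ds.$$
In the denominator, dropping $(1-|s|)\le 1$ and extending the range to all of $\mR$ gives $\cZ \le \int_{-\infty}^{\infty}\varphi(s)\,ds = \sqrt{\pi}/A$. The substitution $z = As$ turns the ratio into $\tfrac{1}{8}\operatorname{erf}(A/8)$, which exceeds $1/16$ once $\operatorname{erf}(A/8)\ge 1/2$, i.e.\ for all sufficiently large $A$, yielding the desired bound. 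The calculation is routine and I do not expect an obstacle; the only mild care is in choosing the truncation $|s|\le 1/8$ so that the triangular weight $1/4-|s|$ still has enough room, which is what makes a constant factor survive.
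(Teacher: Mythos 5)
Your proof is correct, but it takes a genuinely different route from the paper. You collapse the two-dimensional integrals along the anti-diagonal direction via $s=u-v$, reducing everything to one-dimensional Gaussian integrals, and then estimate the resulting $\operatorname{erf}(A/8)$ directly. The paper instead argues geometrically: it partitions $[0,1]^2$ into sixteen $\tfrac14\times\tfrac14$ squares and proves by a pointwise comparison of $\varphi$ (using reflections and diagonal shifts) that the two corner squares forming $S$ each have $\pi$-probability at least as large as any of the other fourteen, which immediately forces $\pi(S)\ge 2/16$. The trade-off is instructive. The paper's argument is soft --- it never evaluates or bounds a Gaussian integral --- and consequently delivers $\pi(S)\ge 1/8$ for \emph{every} $A>0$. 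Your approach is more computational and only yields the bound once $\operatorname{erf}(A/8)\ge 1/2$, i.e.\ for $A\gtrsim 4$; since the paper works exclusively in the large-$A$ regime (Theorem \ref{thm:lowerbound} is stated ``for large enough $A$''), this weaker form is entirely sufficient for the application, but it is worth noting that the lemma as literally stated is unconditional, which your proof does not reproduce. On the other hand, your reduction to $\int_{-L}^{L}(L-|s|)\varphi(s)\,ds$ is a clean, reusable device (it also identifies the four diagonal squares as having equal mass, which the paper observes by translation invariance), and it gives a quantitative bound $\pi(S)\ge \tfrac18\operatorname{erf}(A/8)$ rather than just the threshold $1/8$.
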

\begin{proof}
  If we divide the unit square to 4-by-4 equal size smaller squares,
  then $S$ is composed of two of these smaller squares, see Figure~\ref{fig:sqsplit}. 
  \begin{figure}[h]
    \centering
    \includegraphics[width=0.4\textwidth]{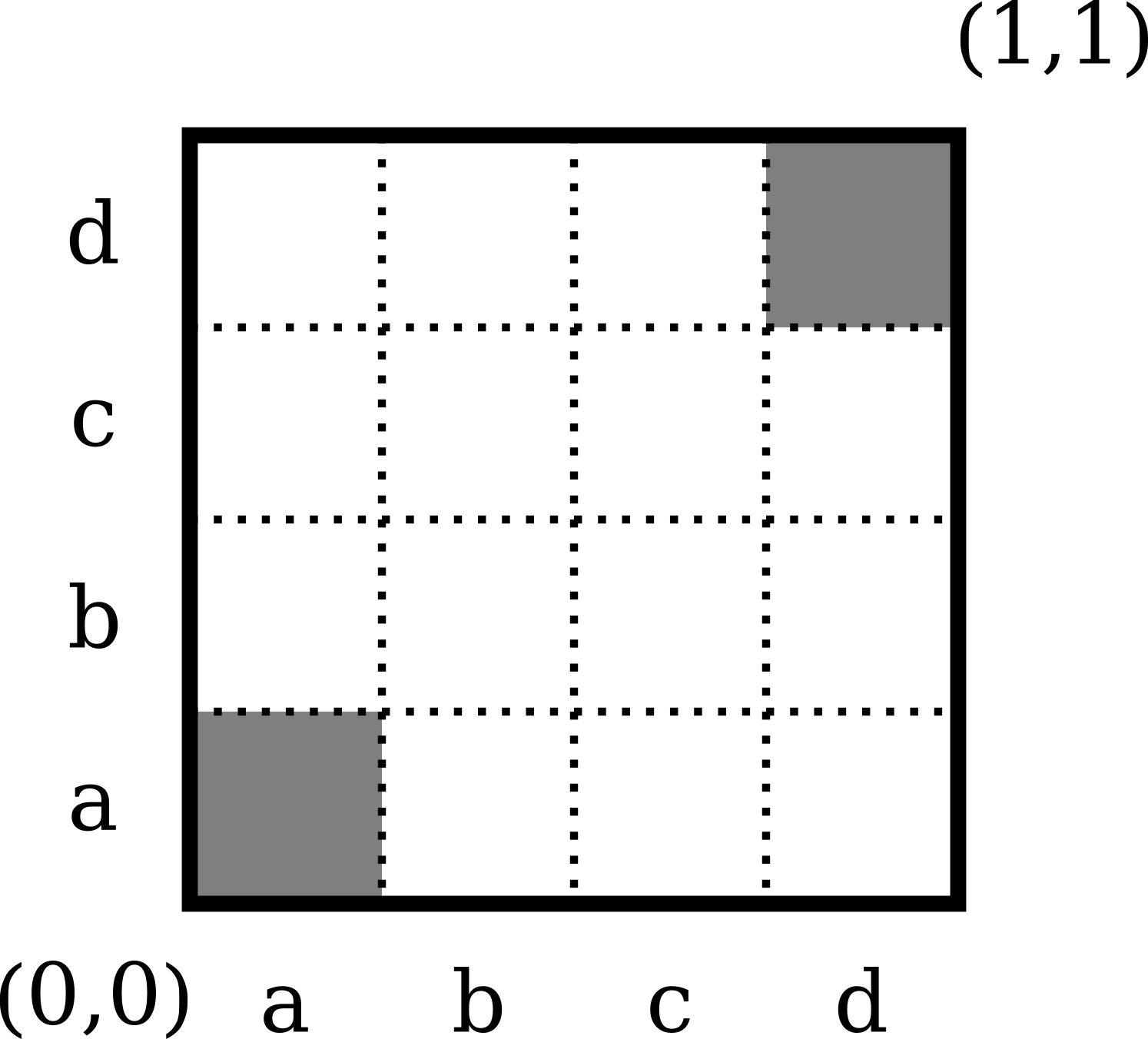}
    \caption{4-by-4 division of the unit square to smaller
      squares. Horizontal and vertical intervals are labeled with
      letters for easier reference. The shaded area represents $S$.}
    \label{fig:sqsplit}
  \end{figure}
  It is enough to show that the selected squares forming $S$ have greater or
  equal probability than the other squares w.r.t.\ $\pi$, this directly
  confirms $\pi(S)\ge 2/16 = 1/8$.

  To verify this, we compare the
  unnormalized density $\varphi$ on them. We use the simple
  inequality that for $u,v\in [0,1/4]$ and any $x\ge 0$ we have
  $$
  \varphi(u-v) \ge \varphi(u-(1/2-v+x)).
  $$
  Indeed, note that $\varphi(y)$ is monotone decreasing with
  $|y|$. Then for $u\ge v, x=0$ an easy comparison of the arguments
  provides the bound, while the other cases follow similarly.
  Observe that for $x=0$ this inequality compares $\varphi$ at some point and
  its reflection to the line $v=1/4$. Setting $x>0$ corresponds to a
  further shift increasing the $v$ coordinate after the reflection.

  Using this we see that the points of the small square labeled by
  $(a,a)$ in Figure~\ref{fig:sqsplit} correspond to
  the points of $(a,b)$ after a reflection so $\varphi$ is pointwise
  larger on $(a,a)$ by the above inequality. The same comparison holds against $(a,c),~(a,d)$
  where an additional shift is necessary besides the reflection.
  Consequently, $\pi$ is maximal for the $(a,a)$ square compared to the
  others in its column.

  Additionally, note that $\varphi(u-v)$ is invariant under the shift
  of $(u,v)$ in the direction $(1,1)$. Therefore $\pi$ is exactly the
  same for the four squares on the diagonal. Furthermore, all the
  other squares are diagonally shifted and/or reflected (w.r.t.\ the
  diagonal) copies of the
  ones considered above, where we have seen that their probability is upper
  bounded by the probability of the square $(a,a)$. The distribution $\pi$ is
  symmetric w.r.t.\ the diagonal, so we conclude that $(a,a)$
  (and therefore also $(d,d)$) have indeed maximal probability among
  all squares.
\end{proof}

\begin{lemma}
  \label{lm:smallS}
  For any $\alpha'_1>0$ there exists $\beta'_1>0$ such that for large
  enough $A$ any $t\le \beta'_1 A^2$ satisfies
  $$P(Y(t) \in S) < \alpha'_1.$$
\end{lemma}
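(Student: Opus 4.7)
The plan is to reduce $P(Y(t)\in S)$ to a one-dimensional tail estimate on $Y_u(t)$ and then invoke the Gaussian random-walk coupling from Section \ref{sec:mixmiddle}. Since $Y(t)=(Y_u(t),Y_u(t-1))$ by the definition of the process and $S=[0,1/4]^2\cup[3/4,1]^2$, the event $\{Y(t)\in S\}$ forces $Y_u(t)\in[0,1/4]\cup[3/4,1]$, equivalently $|Y_u(t)-1/2|\ge 1/4$. It therefore suffices to prove $P(|Y_u(t)-1/2|\ge 1/4)<\alpha_1'$ for every $t\le \beta_1'A^2$.

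The starting point $Y_u(0)=1/2$ lies in $[1/2-\delta,1/2+\delta]$, so the coupling of Lemma \ref{lm:YWcoupling} with a random walk $W$ satisfying $W(0)=1/2$ and i.i.d.\ $\cN(0,1/(2A^2))$ increments is available. On $\{\nu_c^2>t\}$ one has $Y_u(t)=W(t)$, while Lemma \ref{lm:intervalbound} yields, for every $\alpha_4>0$, a constant $\beta_4>0$ with $\beta_4\to 0$ as $\alpha_4\to 0$ and $P(\nu_c^2<\alpha_4 A^2)<\beta_4$. The unconstrained walk satisfies $W(t)-1/2\sim \cN(0,t/(2A^2))$, so for $t\le \beta_1'A^2$ its variance is at most $\beta_1'/2$, and a standard Gaussian tail estimate shows $P(|W(t)-1/2|\ge 1/4)$ is bounded by a quantity depending only on $\beta_1'$ that vanishes as $\beta_1'\to 0$; this bound is automatically uniform over the allowed range of $t$ because the Gaussian tail probability is monotone in variance.

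Given $\alpha_1'>0$ I would first pick $\alpha_4$ small enough that $\beta_4<\alpha_1'/2$, then pick $\beta_1'\le\alpha_4$ small enough that the Gaussian tail is also below $\alpha_1'/2$. Combining the two estimates,
$$P(Y(t)\in S)\le P(|Y_u(t)-1/2|\ge 1/4)\le P(\nu_c^2\le t)+P(|W(t)-1/2|\ge 1/4)<\alpha_1'$$
for all $t\le \beta_1'A^2$, as required. The only delicate point is the parameter bookkeeping that forces $\beta_1'\le\alpha_4$ while keeping both contributions below $\alpha_1'/2$; beyond that there is no real obstacle, as every ingredient has already been prepared in Section \ref{sec:mixmiddle} and the reduction to the $Y_u$-marginal is immediate from the update rule of $Y$.
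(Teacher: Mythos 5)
Your proof is correct and follows essentially the same route as the paper: reduce $P(Y(t)\in S)$ to a tail bound on the $Y_u$-marginal, couple $Y_u$ with the Gaussian random walk $W$ via Lemma~\ref{lm:YWcoupling}, control the decoupling probability with Lemma~\ref{lm:intervalbound}, and bound the Gaussian tail of $W(t)$ using monotonicity in the variance. The only divergence is cosmetic bookkeeping---the paper keeps two separate constants $\beta'_2,\beta'_3$ and sets $\beta'_1=\min(\beta'_2,\beta'_3)$, whereas you impose $\beta'_1\le\alpha_4$ directly---but these are interchangeable.
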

\begin{proof}
  We want to rely on the previous observations that $Y_u(t)$ behaves
  like a random walk for a while with certain Gaussian
  increments. Using Lemma~\ref{lm:intervalbound} we can choose
  $\alpha_4>0$ so that the corresponding $\beta_4$ goes below
  $\alpha'_1/2$. Let us denote this $\alpha_4$ by $\beta'_2$ for convenience.

  Also, there exists $\beta'_3>0$ so that
  $$P(\cN(1/2, \beta'_3/2) \in [0,1/4]\cup [3/4,1]) < \alpha'_1/2,$$
  and clearly the same probability bound holds if the variance is
  decreased. Fixing $Y_u(0)=W(0)=1/2$, the distribution of $W(\beta'_3
  A^2)$ is exactly $\cN(1/2, \beta'_3/2)$.

  To join our estimates we form
  $$P(Y(t) \in S) \le P(Y_u(t) \in [0,1/4]\cup [3/4,1]) \le
  P(W(t)\neq Y_u(t)) + P(W(t) \in [0,1/4]\cup [3/4,1]).$$
  For $t\le \beta'_2 A^2$ the first term is below $\alpha'_1/2$ as it
  is an upper bound for the decoupling of $W,Y_u$ to happen. For $t\le
  \beta'_3 A^2$ the second term is below $\alpha'_1/2$. Altogether, if $t
  \le \min(\beta'_2,\beta'_3) A^2$,
  $$P(Y(t) \in S) < \alpha'_1.$$
  Therefore by choosing $\beta'_1 = \min(\beta'_2,\beta'_3)$ we
  complete the proof.  
\end{proof}

\begin{proof}[Proof of Theorem~\ref{thm:lowerbound}]
  Apply Lemma~\ref{lm:smallS} with $\alpha'_1 = 1/16$ to get some
  $\beta'_1$. The distribution of $X(\beta'_1A^2)$ is a mixture of the
  distributions of $Y(t)$ and their diagonally flipped version for
  $t\le \beta'_1A^2$, where $t$ corresponds to how many times the
  rerandomization happened in a new direction. The set $S$ is
  symmetric w.r.t.\ the diagonal so for $P(X(\beta'_1 A^2)\in S)$ we can
  simply say it is a convex combination of $P(Y(t)\in S),~t\le
  \beta'_1 A^2$ without needing any correction for the diagonal flip. Now by
  Lemma~\ref{lm:smallS} each of these probabilities are below
  $\alpha'_1$, therefore it follows that
  $$P(X(\beta'_1 A^2)\in S) < \alpha'_1 = \frac{1}{16}.$$

  Comparing this with the statement of Lemma~\ref{lm:bigS} we get
  $$\pi(S) - P(X(\beta'_1 A^2)\in S) > \frac{1}{16}.$$
  Consequently $\|\cL(X(\beta'_1 A^2)) - \pi\|_{\rm TV} > 1/16$, so
  $t_{\rm mix}(X,1/16) > \beta'_1 A^2$. Thus the theorem holds with
  the choice $\alpha'=1/16,~\beta'=\beta'_1$.
\end{proof}

Finally, we present numerical approximations of the evolution of the
distribution over time for different values of $A$. The unit square is
discretized with a resolution of $500\times 500$ and the distribution
is calculated along these points. The starting point is always $(0,0)$
at the lower left corner. The results are presented in
Figure~\ref{fig:dist_sim} for different $A$ and different $t$. Both
the convergence to the stationary distribution is visible and also how
this distribution becomes more concentrated along the diagonal for
higher values of $A$. We also computed the time necessary to get within a
total variation distance of $1/4$ of the stationary distribution, for
$A=10,t=71$, for $A=50, t=1858$, for $A=250, t=47233$ is needed. This
is a good proxy for the mixing time, note that only a single (but
intuitively bad) starting point was tested and the discretization
might have introduced some error. Still, the quadratic growth of $t$
with respect to the increase of $A$ is already apparent.
%t_mix(1/4): A=10 -> 71, A=50 -> 1858, A=250 -> 47233

\begin{figure}[h]
    \centering
    \subfloat[$A=10,~t=100$]{
      \includegraphics[width=0.25\textwidth]{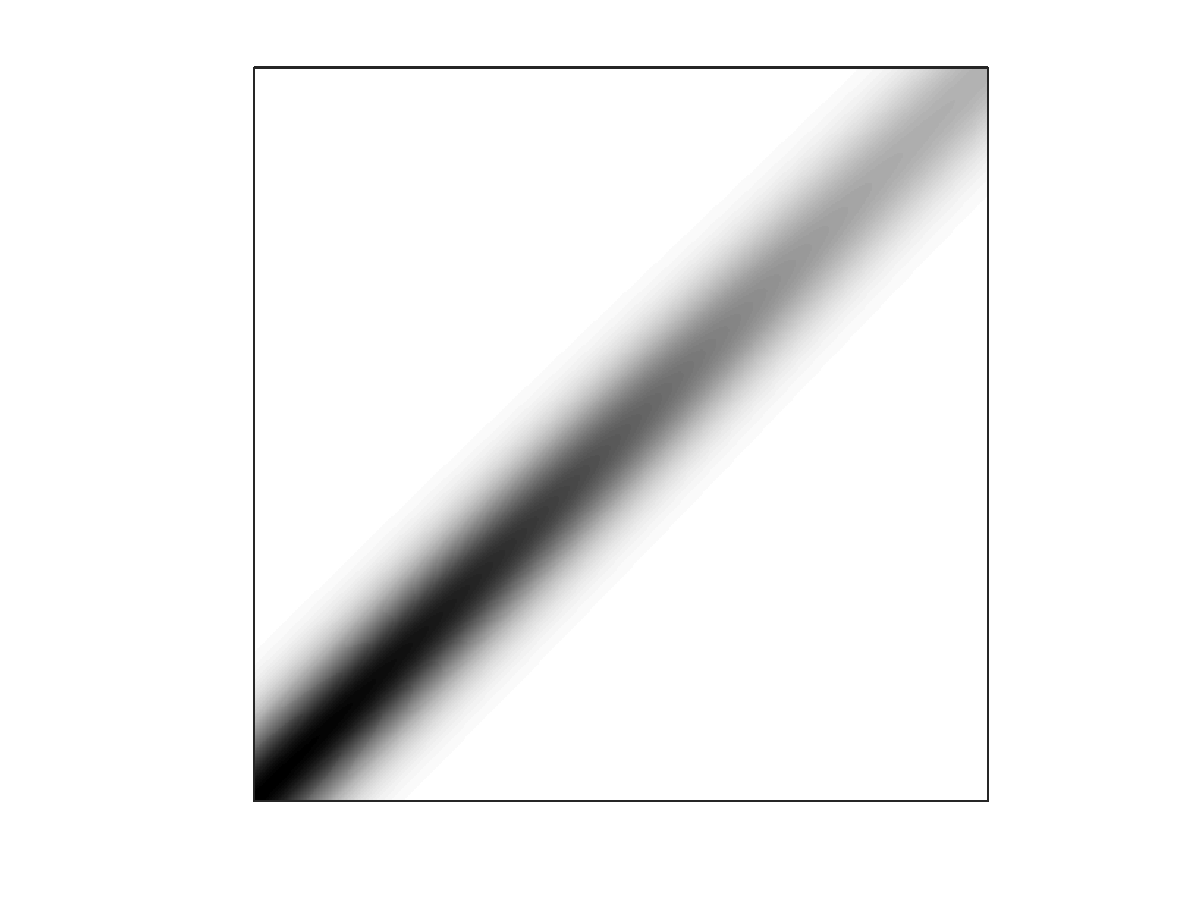}
    }
    \subfloat[$A=10,~t=1000$]{
        \includegraphics[width=0.25\textwidth]{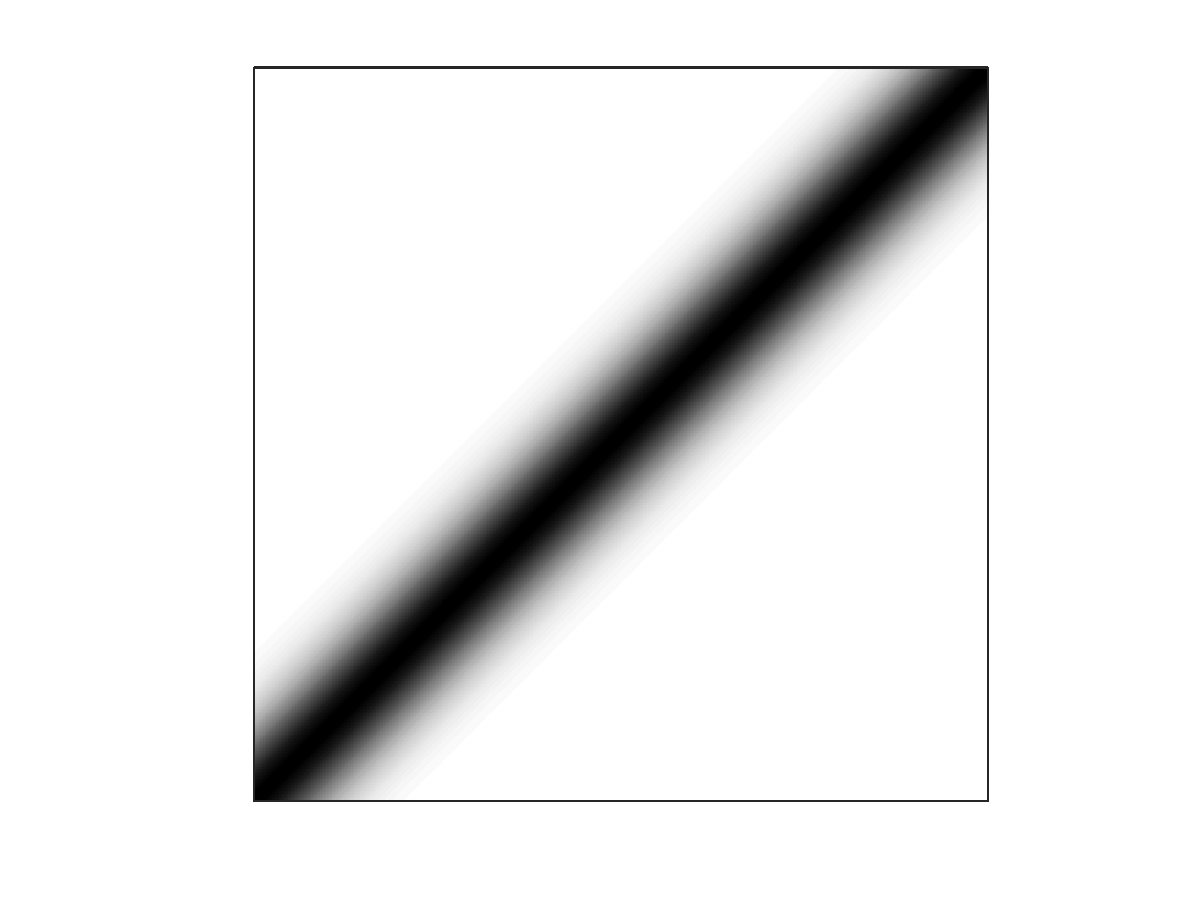}
      }
    \subfloat[$A=10,~t=10000$]{
      \includegraphics[width=0.25\textwidth]{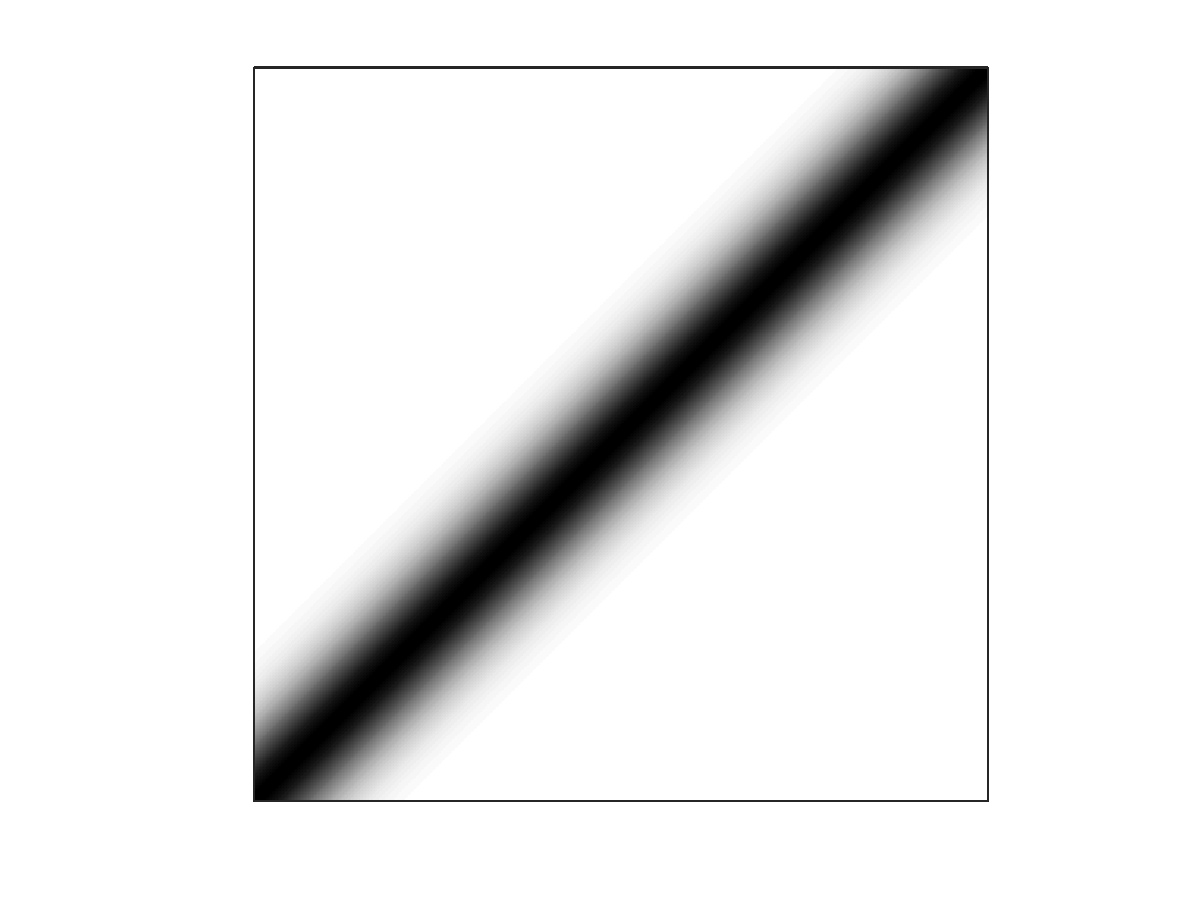}
    }

    \subfloat[$A=50,~t=100$]{
      \includegraphics[width=0.25\textwidth]{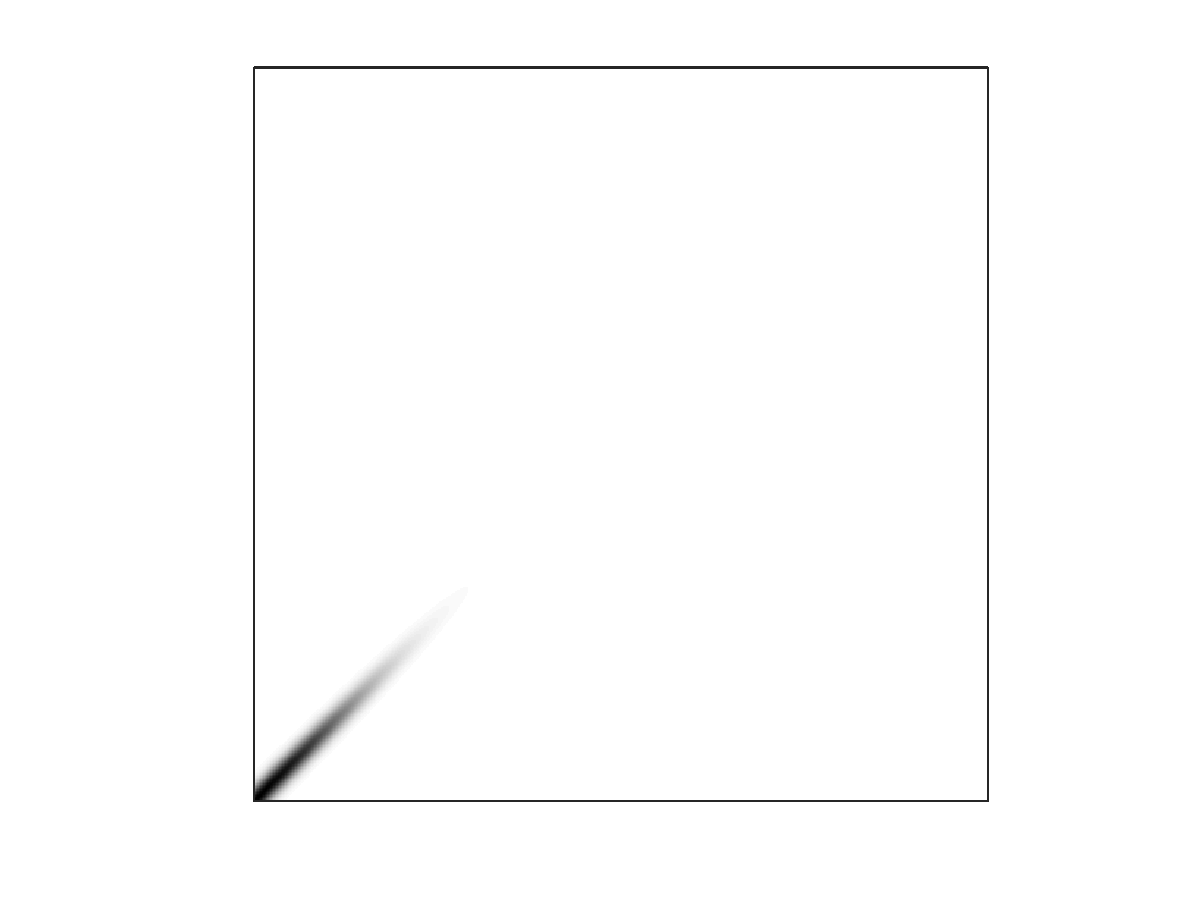}
    }
    \subfloat[$A=50,~t=1000$]{
      \includegraphics[width=0.25\textwidth]{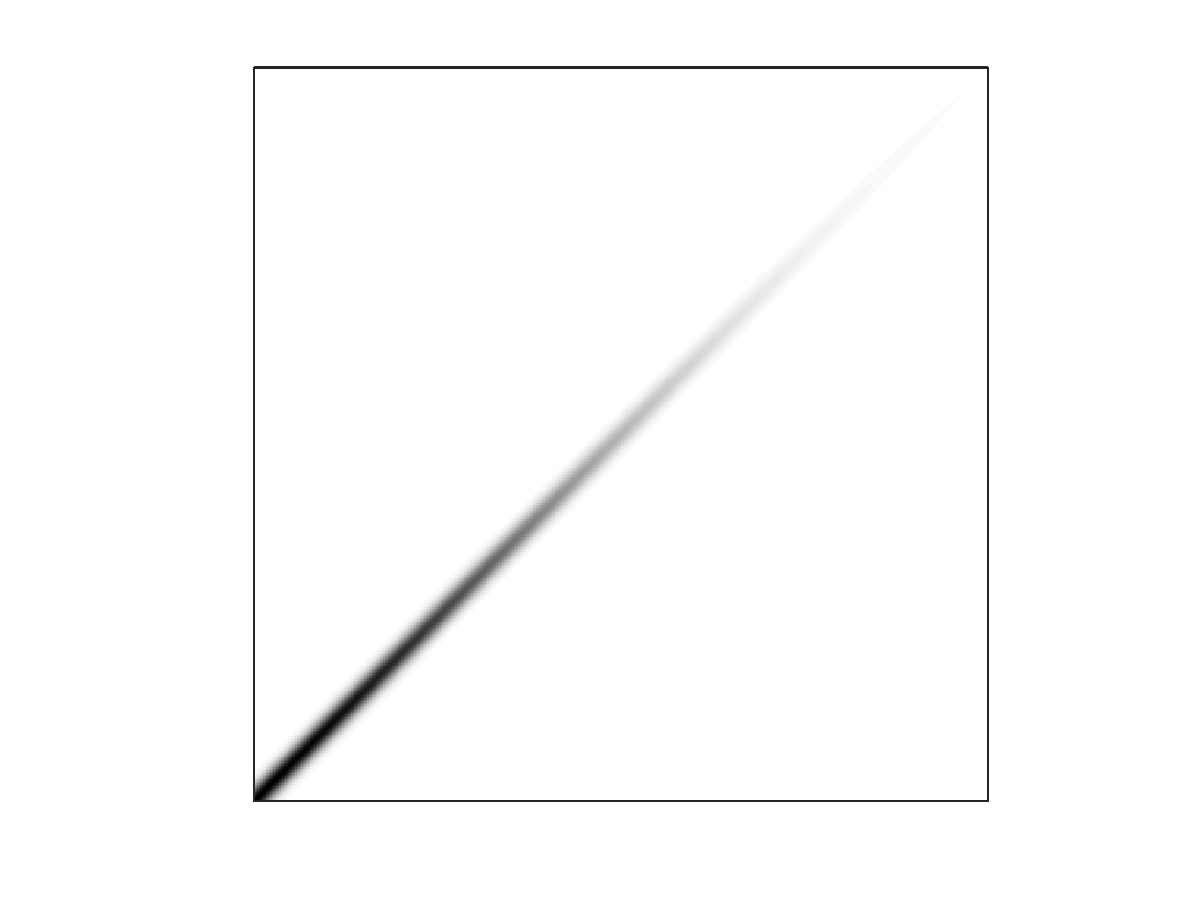}
    }
    \subfloat[$A=50,~t=10000$]{
      \includegraphics[width=0.25\textwidth]{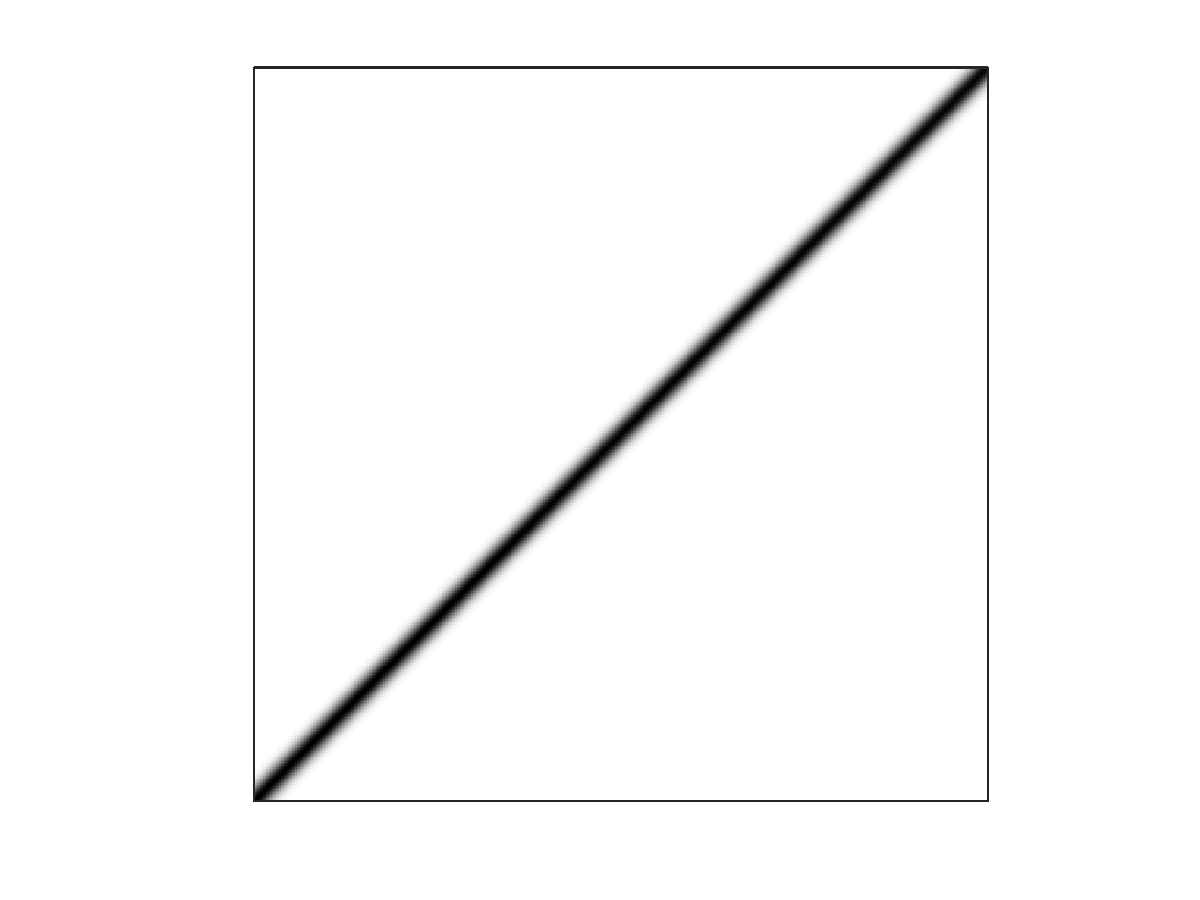}
    }
    
    \subfloat[$A=250,~t=100$]{
      \includegraphics[width=0.25\textwidth]{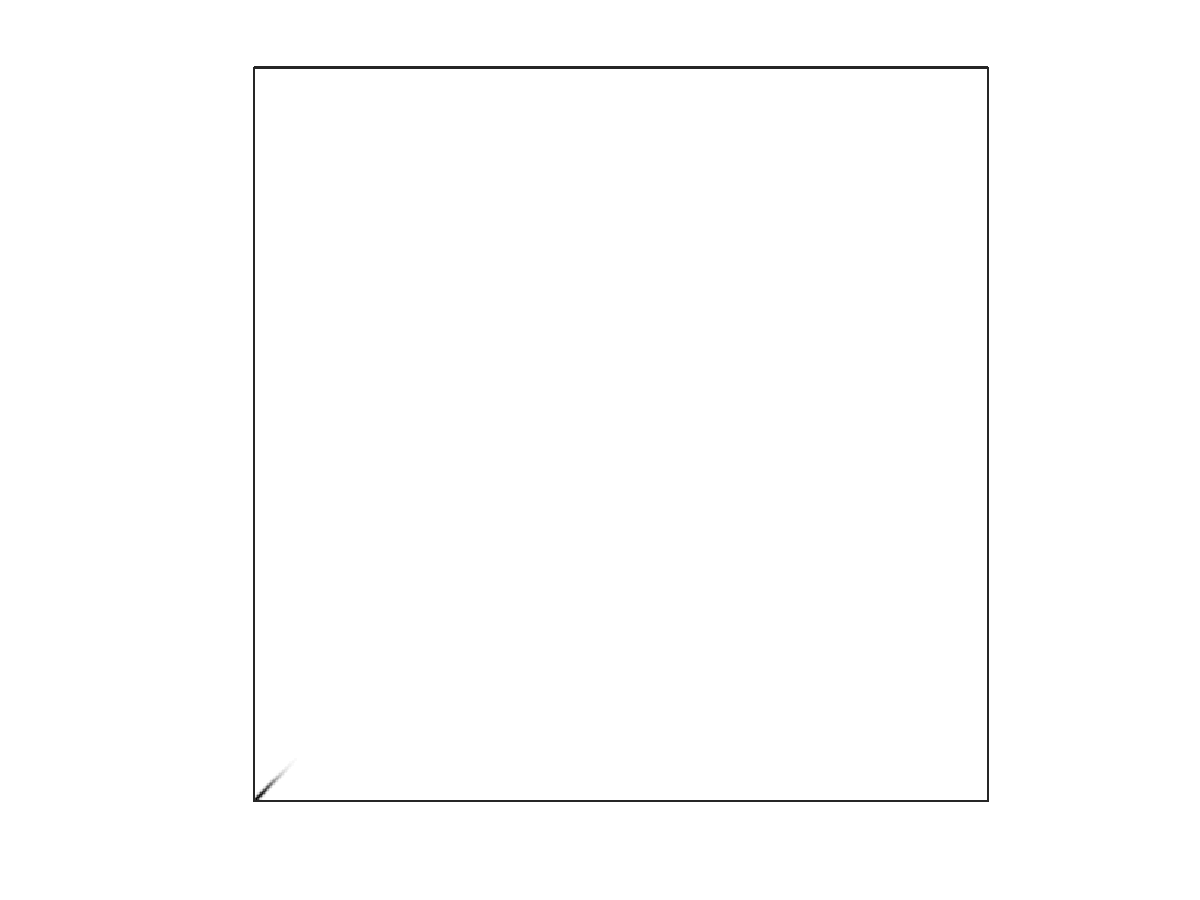}
    }
    \subfloat[$A=250,~t=1000$]{
      \includegraphics[width=0.25\textwidth]{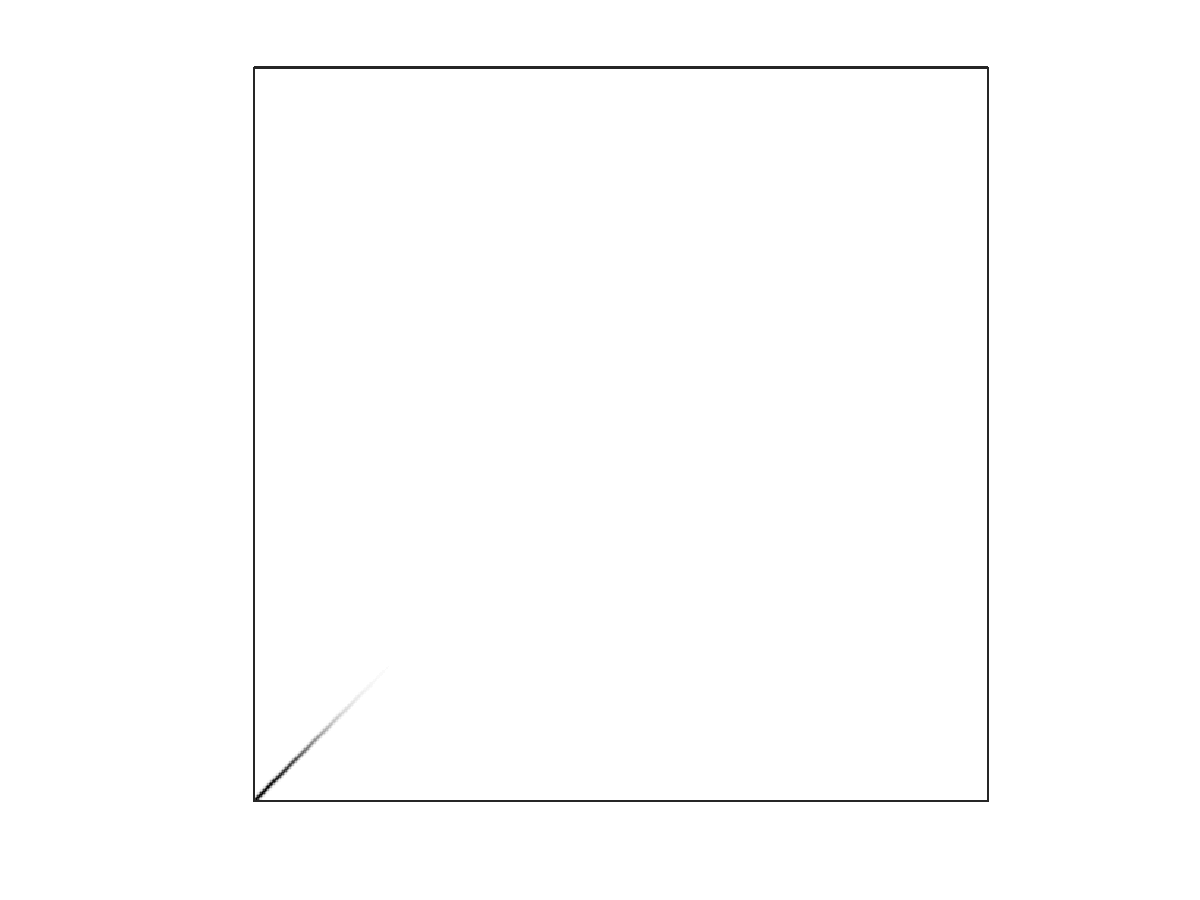}
    }
    \subfloat[$A=250,~t=10000$]{
      \includegraphics[width=0.25\textwidth]{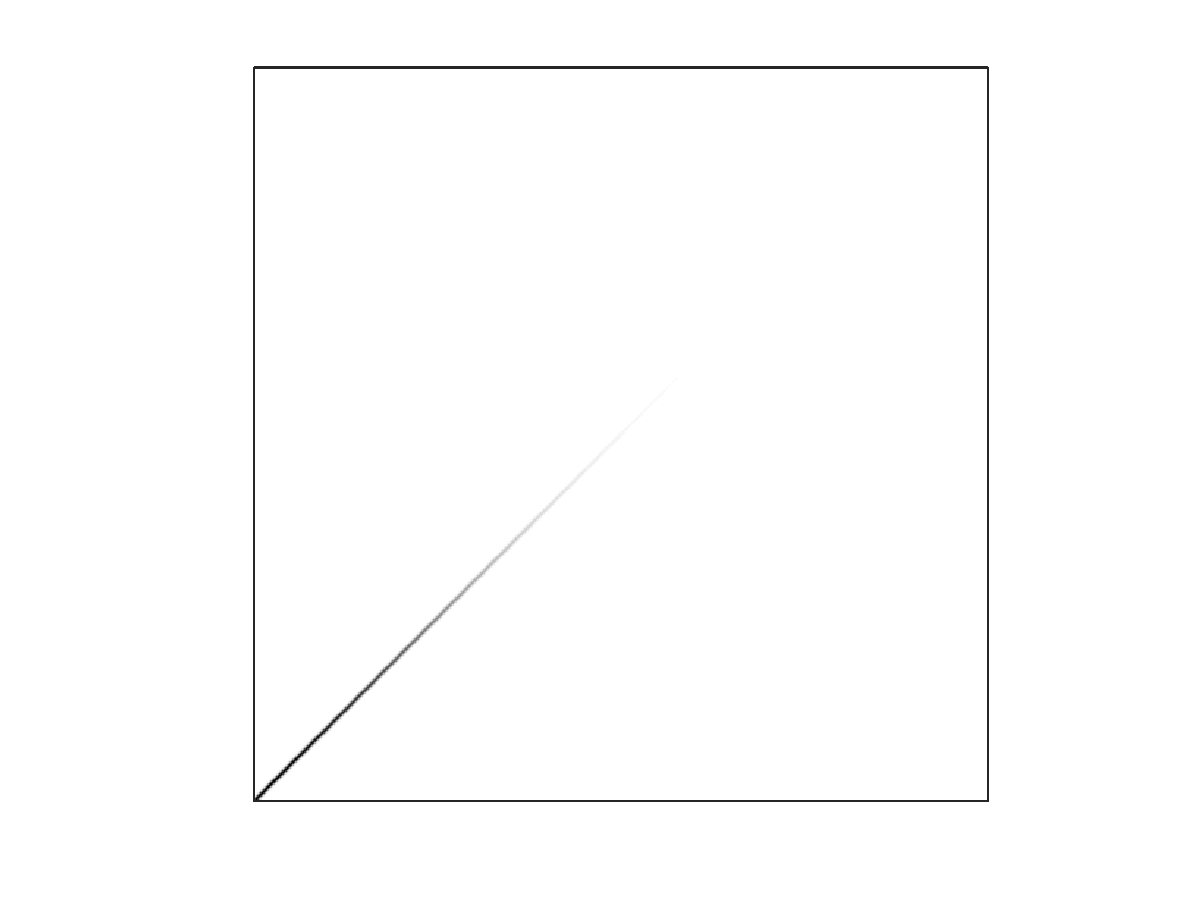}
    }

    \caption{Density of $\cL(X(t))$ for different parameters $A$ and
      $t$. Darker colors represent higher values (shades scale
      individually for each image).}
    \label{fig:dist_sim}
\end{figure}

\section*{Acknowledgments}
The author would like to express his thanks to Persi Diaconis and
Gy\"orgy Michaletzky for their inspiring
comments and to the American Institute of Mathematics for the
stimulating workshop they hosted and organized.

\bibliographystyle{siam}
\bibliography{mcmt}

\end{document}